\documentclass{article}
\usepackage{arXiv}

\usepackage{fancyhdr}
\title{
    \textbf{Mitigating Numerical Stiffness in Least-Squares Formulations of Elliptic PDEs for Physics-Informed Neural Networks 
    }
}
\author{ 
    Phil-Alexander Hofmann\thanks{
        Center for Advanced Systems Understanding, 
        Helmholtz-Zentrum Dresden-Rossendorf e.V.,
        Untermarkt 20, 02826 Görlitz, 
        Germany, 
        \texttt{p.hofmann@hzdr.de} }, \
    Michael Hecht\thanks{ 
        Mathematical Institute, 
        University of Wrocław, 
        pl. Grunwaldzki 2/4, 
        50-384 Wrocław, 
        Poland, 
        \texttt{michael.hecht@math.uni.wroc.pl} 
    } 
}

\usepackage{lipsum}

\begin{document}

\maketitle
\begin{abstract}
We present theoretical insights into \(H^{-1}\) residual loss formulations of physics-informed neural networks (PINNs) for learning solutions of partial differential equations (PDEs). 
Standard PINN formulations use a multi-term loss functional consisting of interior and boundary loss terms that are based on \(L^2\)-residuals and discretized as mean square errors (MSE). 
Imbalanced magnitudes of these terms cause numerical stiffness phenomena, resulting in ill-conditioning and slow convergence. 
In this work, we analyze discretizations of the \(H^{-1}\)-norm that are used in the context of elliptic PDEs with arbitrary, nonzero Dirichlet boundary conditions. 
We prove that these \(H^{-1}\) discretizations rebalance the PDE loss, improve conditioning, and mitigate stiffness effects compared with the standard MSE discretization.
We validate our theoretic results through operator-level experiments with randomly sampled residuals and PINN experiments for the Poisson and stationary incompressible Navier-Stokes equations. These experiments confirm the numerical effectiveness of the proposed rebalancing for elliptic PDEs and, more broadly, for problems with elliptic behavior.
\end{abstract}

\keywords{
    Physics-informed neural networks \and
    Negative Sobolev norm \and 
    Numerical stiffness \and  
    PDE learning
}

\section{Introduction}
\label{sec:introduction}

Across disciplines, partial differential equations (PDEs) are ubiquitous, serving as the primary framework for expressing physical laws and dynamics in systems such as fluid flow, heat transfer, and electromagnetism \cite{Jost2007, Brezis2011}. 
Although, classic numerical methods for PDEs have been developed into a rich and rigorous theory, including finite element methods \cite{Ern2004}, finite difference methods \cite{LeVeque2007}, finite volume methods \cite{Eymard2000}, spectral methods \cite{Bernardi1997, Canuto2007}, and meshfree methods \cite{Li2004}.   However, their applicability is typically tied to inherent choices of meshes, basis functions, and boundary constraints. 
This inflexibility has long been recognized and addressed within \emph{Rayleigh–Ritz–Galerkin methods} \cite{Bramble1970, Babuska1973, Barrett1986}. Here, trial spaces that do not need to satisfy boundary conditions a priori and rely on boundary constraints enforced weakly or, alternatively, by boundary penalty terms.

Since 2017, physics–informed neural networks (PINNs) have emerged as a framework for solving ordinary and partial differential equations through residual-based losses \cite{Raissi2017a, Raissi2017b, Raissi2019}. Despite their empirical success across a wide range of applications \cite{Lawal2022, Cai2021, Toscano2025}, it is well-known that PINNs suffer from severe stiffness and training pathologies \cite{Wang2021, Maddu2022, Hu2024}.

Motivated by the need to mitigate stiffness caused by boundary penalties and gradient imbalance, the \(H^{-1}\) residual loss formulation exploits the connection between PINNs and Rayleigh-Ritz-Galerkin methods \cite{Suarez2024}. 
The existing analysis in \cite{Suarez2024}, however, was restricted to surrogate models satisfying homogeneous boundary conditions and therefore did not cover the full originally proposed objective.

Here, we complete the study of stiffness phenomena by extending the analysis to surrogate models with arbitrary nonzero Dirichlet boundary values. 
In particular, using an estimate of Lions and Magenes~\cite{Lions1972}, we combine an \(H^{-1}\) interior residual loss with an \(H^{1/2}\) boundary residual loss, yielding a formulation that is continuous in the \(H^1\) norm.

\subsection{Contribution}
Classically, a physics-informed neural network (PINN) is a neural network 
\begin{equation}
    u_{\bm \theta} : \Omega \subset \mathbb R^m \to \mathbb R
\end{equation}
parameterized by \(\bm\theta \in \mathbb R^d\), which is used to find an approximate solution of a PDE. Let \(m \in \mathbb N\), a PDE operator \(\mathcal N(\cdot)\) on the domain \(\Omega \subset \mathbb R^m\) with boundary \(\partial \Omega\) and a source term \(f(\cdot)\). Then, following the standard residual-based PINN formulation \cite{Raissi2019, Wang2021}, the PDE is directly included into the training by minimizing the loss functional
\begin{equation}
    \label{eq:l2_residual_loss_functional}
    \mathcal L(\bm \theta) 
    \coloneqq 
    \mathcal L_\Omega(\bm \theta) + \mathcal{L}_{\partial\Omega}(\bm \theta) 
    \coloneqq \int_\Omega \mathcal (\mathcal N(u_{\bm \theta})-f)^2 d^m \bm x + \int_{\partial \Omega}(u_{\bm \theta}\big|_{\partial\Omega} -g)^2d^{m-1} \bm x 
    \longrightarrow 
    \min.
\end{equation}
In practice, the integrals are approximated by finite sums over randomly sampled training data points \(P_\Omega \subset \Omega\) and \(P_{\partial\Omega} \subset \partial\Omega\) resulting in the discrete mean squared error (MSE) losses
\begin{equation}
    \mathcal L_\Omega(\bm \theta)
    \approx
    \frac{1}{|P_\Omega|}\sum_{\bm x \in P_\Omega} \mathcal (\mathcal N(u_{\bm \theta})(\bm x) -f(\bm x))^2, \\ 
    \qquad
    \mathcal L_{\partial \Omega}(\bm \theta) 
    \approx
    \frac{1}{|P_{\partial\Omega}|} \sum_{\bm x \in P_{\partial \Omega}}(u_{\bm \theta} (\bm x) -g(\bm x))^2.
\end{equation}
Specifically, the MSE loss may be dominated by the PDE residual, which can underweight the boundary terms and cause an imbalance in their magnitudes
\begin{equation}
    \mathcal{O}(\mathcal L_\Omega(\bm\theta)) \gg \mathcal{O}(\mathcal L_{\partial\Omega}(\bm\theta)).
\end{equation}
This stiffness phenomenon causes the gradient of the loss functional \(\mathcal L\) to be dominated by the contribution of the PDE residual, while the influence of the boundary residual becomes comparatively small.
As studied in several works \cite{Wang2021, Maddu2022, Suarez2024}, training faces slow convergence and instability, commonly appearing as boundary artifacts as well as limited accuracy compared to classic variational approaches.

Specifically, we detail our contribution below:

\begin{itemize}
    \item[\textbf{C1)}] We revisit discretizations of the \(H^{-1}\)-norm \cite{Suarez2024} and reinterpret them as the seminorms SINOS and D-SINOS. We derive continuity and coercivity estimates that characterize the conditioning of elliptic boundary value problems in Section~\ref{sec:subspace_induced_negative_order_sobolev_seminorm}, and Section~\ref{sec:dual_subspace_induced_negative_order_sobolev_seminorm}. The achieved bounds depend on Rayleigh quotients, and therefore, on the  chosen approximation space.

    \item[\textbf{C2)}] 
    In case of finite-dimensional polynomial spaces we show the Rayleigh quotients to imply continuity and coercivity estimates that improve conditioning in Section~\ref{sec:discretization_of_the_residual_loss_functional}, namely
    \begin{equation}
        \kappa_{L^2} \in \mathcal{O}(n^4), \qquad
        \kappa_{\mathrm{SINOS}} \in \mathcal{O}(n^3), \qquad
        \kappa_{\mathrm{D\text{-}SINOS}} \in \mathcal{O}(n^2),
    \end{equation}
    where \(n\) denotes the degree of freedom.
    
    \item[\textbf{C3)}] We demonstrate in Section~\ref{sec:random_sampling_test} that using a conjugate gradient (CG) method with an appropriate diagonal preconditioner requires only a few iterations in practice, making SINOS and D-SINOS computationally feasible, with quadratic runtime complexity.
    In lower spatial dimensions, a precomputed pseudoinverse can be used instead. 
    Moreover, domain decomposition into reference cells yields sparse operators that retain the effectiveness of their dense counterparts, see Section~\ref{sec:numerical_experiments}.

    \item[\textbf{C4)}] We apply SINOS and D-SINOS to PINNs in Section~\ref{sec:application_to_pinns}. Our experiments show that SINOS and D-SINOS improve the convergence behavior and prediction quality of PINNs for PDEs with elliptic behavior.
    Notably, SINOS converges faster than D-SINOS despite having a larger condition number, indicating that stiffness mitigation is important but does not fully determine PINN performance.
\end{itemize}
    
\subsection{Preliminaries}

Throughout this paper, a set \(\Omega \subset \mathbb R^m\) is called a domain, when it is nonempty, open, connected and Lebesgue measurable. Further, let \(\Omega \subset \mathbb R^m\) be a domain.
We use standard notation for Lebesgue and Sobolev spaces, following \cite{Adams2003, Brezis2011}.
We denote by \(L^2(\Omega)\) the space of equivalent classes of measurable functions \(u: \Omega \to \mathbb R\) such that
\begin{equation}
    \int_\Omega |u|^2 d^m\bm x 
    < 
    \infty,
\end{equation}
where functions are identified if they agree almost everywhere in \(\Omega\). The space \(L^2(\Omega)\) is equipped with the norm
\begin{equation}
    \|u\|_{L^2(\Omega)} 
    \coloneqq 
    \left( \int_{\Omega} |u|^2 d^m \bm x \right)^{1/2},
\end{equation}
which is induced by the inner product
\begin{equation}
    \langle u, v \rangle_{L^2(\Omega)} 
    \coloneqq 
    \int_\Omega u v d^m \bm x.
\end{equation}
Moreover, \(L^2(\Omega)\) is a Hilbert space. Further, we define
\begin{equation}
    L^\infty(\Omega) 
    \coloneqq 
    \left\{ u: \Omega \to \mathbb R \ : \ u \text{ is measurable}, \ \text{ess sup}_{\bm x \in \Omega} |u(\bm x)| < \infty \right\},
\end{equation}
endowed with the norm
\(
    \|u\|_{L^\infty(\Omega)} \coloneqq \text{ess sup}_{\bm x \in \Omega} |u|.
\)

For convenience, Table~\ref{tab:notation} serves as a complementary reference listing the notation and basic operations.

\begin{table}[ht]
    \centering
    \caption{Notation used throughout the paper.}
    \label{tab:notation}
    \renewcommand{\arraystretch}{1.15}
    \setlength{\tabcolsep}{8pt}
    \small
    \begin{tabular}{|c|l|c|l|}
    \hline
    $\Omega$ & domain &
    $\partial\Omega$ & boundary of the domain \\
    $m$ & spatial dimension &
    $\bm x$ & spatial variable \\
    $u_{\bm\theta}$ & neural-network approximation &
    $u_*$ & exact solution \\
    $\bm\theta$ & trainable parameters &
    $T$ & differential operator \\
    $f$ & source term &
    $g$ & Dirichlet boundary data \\
    $\operatorname{tr}(u)$ & trace of $u$ &
    $\mathcal L$ & residual loss functional \\
    $\mathcal L_\Omega$ & PDE loss &
    $\mathcal L_{\partial\Omega}$ & boundary loss \\
    $P_\Omega$ & interior grid &
    $P_{\partial\Omega}$ & boundary grid \\
    $V$ & approximation space &
    $\Pi_\square$ & polynomial space \\
    $\bm W$ & interior weight matrix &
    $\bm W_{\partial\Omega}$ & boundary weight matrix \\
    $\bm D^{(i)}$ & differentiation matrix &
    $\bm I$ & identity matrix \\
    $\bm J$ & SINOS matrix &
    $\bm K$ & D-SINOS matrix \\
    $|\cdot|_{-1,V}$ & SINOS seminorm &
    $|\cdot|_{-1,V^*}$ & D-SINOS seminorm \\
    $H^1(\Omega)$ & Sobolev space &
    $H^{-1}(\Omega)$ & negative Sobolev space \\
    $H^{1/2}(\partial\Omega)$ & boundary Sobolev space &
    $\kappa$ & condition number \\
    $\mathcal{O}(\cdot)$ & asymptotic upper bound &
    $\|\cdot\|_{L^2}$ & $L^2$-norm \\
    \hline
    \end{tabular}
\end{table}

\section{Sobolev Space Theory}

Sobolev spaces are the classic analytical setting for formulating and studying PDEs. We briefly recall some basics, including Sobolev spaces of positive integer, fractional, and negative integer-order. We refer to Adams and Fournier \cite{Adams2003} for further details.

\subsection{Sobolev spaces}

We begin with the elementary definition of positive integer-order Sobolev spaces.

\begin{definition}[Sobolev space]
    \label{def:sobolev_space}
    Let \(m, k \in \mathbb N\) with \(k \le m\) and \(\Omega \subset \mathbb R^m\) be a domain. The Sobolev space \(H^k(\Omega)\) is defined by
    \begin{equation}
        H^k(\Omega) 
        \coloneqq 
        \left\{u \in L^2(\Omega) \ : \ \partial_{\bm \alpha} u \in L^2(\Omega) \ \forall \bm \alpha \in \mathbb N_0^m: \ |\bm\alpha| \leq k\right\},
    \end{equation}
    endowed with the norm
    \begin{equation}
        \|u\|_{H^k(\Omega)} 
        = 
        \left(
            \int_{\Omega} |u|^2 d^m \bm x  
            + 
            \sum_{|\bm\alpha|\leq k} 
                \int_{\Omega} |\partial_{\bm\alpha}u|^2 d^m \bm x
        \right)^{1/2},
    \end{equation}
    where \(\partial_{\bm\alpha}\) corresponds to the weak derivative. The subspace \(H_0^k(\Omega) \subseteq H^k(\Omega)\) is defined as the closure of \(C_c^\infty(\Omega)\) in \(\|\cdot\|_{H^k(\Omega)}\), and is given by the space of functions in \(H^k(\Omega)\) whose traces on \(\partial \Omega\) vanish.
\end{definition}

\subsection{Sobolev space of fractional-order and the trace theorem}

We extend the definition of Sobolev spaces to the fractional-order Sobolev space \(H^{1/2}(\Omega)\), which was independently introduced by Aronszajn, Gagliardo and Slobodeckij~\cite{Aronszajn1955, Gagliardo1958, Slobodeckij1958}. This space is an intermediate space \(H^1(\Omega) \subset H^{1/2}(\Omega) \subset L^2(\Omega)\). 

\begin{definition}[Aronszajn, Gagliardo, Slobodeckij space]
    Let \(m \in \mathbb N\) and \(\Omega \subset \mathbb R^m\) be a domain. We define \(H^{1/2}(\Omega)\) as follows
    \begin{equation}
        H^{1/2}(\Omega) \coloneqq \left\{u\in L^2(\Omega) \ : \ \frac{|u(\bm x)-u(\bm y)|}{\|\bm x - \bm y\|_2^{(m+1)/2}} \in L^2(\Omega \times \Omega)\right\},
    \end{equation}
    endowed with the norm
    \begin{equation}
        \label{eq:5}
        \|u\|_{H^{1/2}(\Omega)} 
        \coloneqq 
        \left(
            \int_\Omega |u|^2 d^m \bm x 
            + 
            \int_\Omega \int_\Omega \frac{
                    |u(\bm x)-u(\bm y)|^2
                }{
                    \|\bm x - \bm y\|_2^{m+1}
                }
                d^m \bm x d^m \bm y
        \right)^{1/2}.
    \end{equation}
\end{definition}

We recall the following version of the trace operator, which is a result of Gagliardo~\cite{Gagliardo1957}.

\begin{theorem}[Trace operator]
    \label{theo:trace_operator}
    Let \(m \in \mathbb N\) and \(\Omega \subset \mathbb R^m\) be a bounded domain with Lipschitz boundary. Then, there exists a linear bounded and surjective operator \(\operatorname{tr}: H^1(\Omega) \to H^{1/2}(\partial\Omega)\) which extends the classic trace for all \(u \in H^1(\Omega) \cap C(\overline{\Omega})\)
    \begin{equation}
        \operatorname{tr}(u) = u\big|_{\partial\Omega}.
    \end{equation}
    Further, there exists a linear bounded extension operator \(E : H^{1/2}(\partial\Omega) \to H^1(\Omega)\) such that 
    \begin{equation}
        \operatorname{tr}(E(u)) 
        = 
        u.
    \end{equation}
\end{theorem}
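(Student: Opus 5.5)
The plan is the classical Gagliardo argument, done locally in charts. I first prove the trace estimate for smooth functions, then extend by density, then build a right inverse by an explicit lifting. Since \(\partial\Omega\) is Lipschitz and compact, I cover it by finitely many open sets \(U_1,\dots,U_N\). In each \(U_j\), after a rigid motion, \(\Omega\cap U_j\) is the region above the graph of a Lipschitz function \(\gamma_j\). I add \(U_0 \Subset \Omega\) and take a smooth partition of unity \(\{\psi_j\}\) subordinate to this cover. The bi-Lipschitz flattening maps \((\bm x',x_m)\mapsto(\bm x',x_m-\gamma_j(\bm x'))\) preserve \(H^1\) (chain rule for Lipschitz maps). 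They also preserve the Gagliardo seminorm in (\ref{eq:5}) up to constants, because \(\|\bm x-\bm y\|_2\) changes by at most a bounded factor. This reduces both statements to the half-space \(\mathbb R^m_+\), with functions of compact support.

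On the half-space, take \(u\in C_c^\infty(\overline{\mathbb R^m_+})\). The approach I would pursue is the Fourier characterization: on \(\mathbb R^{m-1}\), the seminorm in (\ref{eq:5}) equals \(c\int|\bm\xi|\,|\hat v(\bm\xi)|^2d\bm\xi\). Let \(\hat u(\bm\xi,t)\) be the partial Fourier transform in \(\bm x'\). For each \(\bm\xi\), I write \(|\hat u(\bm\xi,0)|^2=-\int_0^\infty\partial_t|\hat u(\bm\xi,t)|^2dt\). Young's inequality with weight \(\langle\bm\xi\rangle=(1+|\bm\xi|^2)^{1/2}\) then gives
\[
\langle\bm\xi\rangle\,|\hat u(\bm\xi,0)|^2\le \int_0^\infty\bigl(\langle\bm\xi\rangle^2|\hat u|^2+|\partial_t\hat u|^2\bigr)dt .
\]
Integrating in \(\bm\xi\) and using Plancherel yields \(\|u(\cdot,0)\|_{H^{1/2}}\le C\|u\|_{H^1(\mathbb R^m_+)}\). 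Smooth functions up to the boundary are dense in \(H^1(\Omega)\) for Lipschitz domains (by translating in the cone direction of each chart and mollifying). So \(\operatorname{tr}\) extends uniquely and boundedly, and it agrees with restriction on \(H^1(\Omega)\cap C(\overline\Omega)\). For this last point I approximate a continuous \(u\) by mollifications, which converge uniformly on \(\partial\Omega\).

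For the extension operator, I set \(\hat{(Ev)}(\bm\xi,t)=\hat v(\bm\xi)e^{-t\langle\bm\xi\rangle}\) on the half-space. A direct computation gives \(\int_0^\infty(\langle\bm\xi\rangle^2|\hat{(Ev)}|^2+|\partial_t\hat{(Ev)}|^2)\,dt=\langle\bm\xi\rangle|\hat v|^2\). Hence \(\|Ev\|_{H^1}\le C\|v\|_{H^{1/2}}\) and \(Ev(\cdot,0)=v\). To globalize, I take \(v\in H^{1/2}(\partial\Omega)\) and split it as \(\sum_j\psi_jv\). Each \(\psi_jv\) is pulled back to \(\mathbb R^{m-1}\) via \(\gamma_j\), extended there, and multiplied by a cutoff \(\chi_j\) that equals \(1\) near \(\operatorname{supp}\psi_j\). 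The result is mapped back and the pieces are summed. Linearity and boundedness are then immediate, \(\operatorname{tr}\circ E=\mathrm{id}\) follows chart by chart, and surjectivity of \(\operatorname{tr}\) follows from the existence of \(E\).

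The main obstacle is the equivalence between the intrinsic double-integral seminorm on the Lipschitz manifold \(\partial\Omega\) and the Fourier seminorm in charts. There are two parts to this. The first is that multiplication by \(\psi_j\) is bounded on \(H^{1/2}\). I would handle this by splitting \(\psi(\bm x)v(\bm x)-\psi(\bm y)v(\bm y)\) and bounding \(|\psi(\bm x)-\psi(\bm y)|^2/\|\bm x-\bm y\|^{m}\), which is integrable in \(m-1\) boundary dimensions. The second is controlling the cross-chart contributions to the double integral, where \(\|\bm x-\bm y\|\) is bounded below. I would prove these by direct estimates rather than try to cite them. Here only Lipschitz, not \(C^1\), regularity enters, which is exactly why the range stops at \(H^{1/2}\) and the argument does not need to go higher.
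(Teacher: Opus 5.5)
Your argument is correct. It necessarily takes a different route from the paper, which gives no proof of Theorem~\ref{theo:trace_operator} and only quotes it from Gagliardo~\cite{Gagliardo1957}.

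You give the Hilbert-space proof:
\begin{itemize}
\item localize by Lipschitz graphs and take the partial Fourier transform in \(\bm x'\);
\item obtain the trace bound from \(\langle\bm\xi\rangle|\hat u(\bm\xi,0)|^2\le\int_0^\infty(\langle\bm\xi\rangle^2|\hat u|^2+|\partial_t\hat u|^2)\,dt\);
\item lift with the multiplier \(e^{-t\langle\bm\xi\rangle}\), whose \(H^1\) energy at frequency \(\bm\xi\) is exactly \(\langle\bm\xi\rangle|\hat v(\bm\xi)|^2\).
\end{itemize}
In Gagliardo's approach the argument is real-variable. The trace bound comes from controlling difference quotients of the boundary values by integrals of \(\nabla u\) through the interior. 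The right inverse is an averaging operator of the form \(Ev(\bm x',t)=\chi(t)\int v(\bm x'+t\bm z)\varphi(\bm z)\,d\bm z\), with a mollifier \(\varphi\) and a cutoff \(\chi\).

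Your version buys brevity and exact identities on the half-space. The price is that it is tied to \(p=2\), and it needs the Fourier representation \(c\int|\bm\xi|\,|\hat v|^2\,d\bm\xi\) of the double-integral seminorm. The real-variable method works with the double integral directly in each chart and gives the full scale \(W^{1,p}(\Omega)\to W^{1-1/p,p}(\partial\Omega)\), \(1<p<\infty\). The localization estimates you single out as the main obstacle are needed in either approach: multiplication by \(\psi_j\), bi-Lipschitz invariance of the seminorm, and cross-chart terms with \(\|\bm x-\bm y\|_2\) bounded below. Your treatment of them is sound.

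Two details are worth stating explicitly when you write it out.
\begin{itemize}
\item After the flattening \((\bm x',x_m)\mapsto(\bm x',x_m-\gamma_j(\bm x'))\), the piece \(\psi_j u\) of a smooth \(u\) is only Lipschitz, not in \(C_c^\infty(\overline{\mathbb R^m_+})\). So your half-space inequality must be applied to compactly supported Lipschitz functions. This costs nothing, because \(|\hat u(\bm\xi,0)|^2=-\int_0^\infty\partial_t|\hat u(\bm\xi,t)|^2\,dt\) only uses absolute continuity in \(t\).
\item For \(u\in H^1(\Omega)\cap C(\overline\Omega)\), the approximants should be the inward-translated mollifications you already use for density, since plain mollification is not defined up to \(\partial\Omega\). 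By uniform continuity of \(u\) on \(\overline\Omega\) they converge uniformly there, which yields \(\operatorname{tr}(u)=u|_{\partial\Omega}\).
\end{itemize}
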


\subsection{Sobolev space of negative-order for the dual space}

In classic numerical analysis of PDEs, the \(H^{-1}\) Sobolev space is a well-established concept \cite{Bochev97, Bramble97, Kim02}. 
It is typically defined as the dual space of \(H_0^1\). 
In the context of PINNs, boundary conditions are commonly enforced softly by adding boundary residual terms to the loss functional, rather than being satisfied by construction \cite{Berrone2023}.
As a consequence, the dual of \(H_0^1\) is inadequate, as functions with identical behavior in the interior but different boundary values may become indistinguishable when tested, which may lead to severe stiffness effects. 
These effects can be circumvented by instead considering the dual space of \(H^1(\Omega)\). 
Both definitions are well understood in the literature, see, \cite{Adams2003}.

This motivates the following definition.
\begin{definition}[Negative-order Sobolev spaces]
    Let the conditions of Definition~\ref{def:sobolev_space} be satisfied. We define the first negative-order Sobolev spaces as the duals of the positive-order spaces
    \begin{equation}
        H^{-1}(\Omega) 
        \coloneqq 
        H^1(\Omega)^*, 
        \qquad
        H_0^{-1}(\Omega) 
        \coloneqq 
        H_0^1(\Omega)^*,
    \end{equation}
    respectively endowed with the norms
    \begin{equation}
        \|u\|_{H^{-1}(\Omega)} 
        \coloneqq 
        \sup_{\substack{v \in H^1(\Omega):\, \|v\|_{H^1(\Omega)} = 1}} \langle u, v\rangle_{L^2(\Omega)}, 
        \qquad 
        \|u\|_{H_0^{-1}(\Omega)} 
        \coloneqq 
        \sup_{\substack{v \in H_0^1(\Omega):\, \|v\|_{H^1(\Omega)} = 1}} \langle u, v\rangle_{L^2(\Omega)}.
    \end{equation}
\end{definition}
The \(H^{-1}\)-space can, similarly to \(H^{1/2}\), be regarded as an intermediate space.
\begin{corollary}
    \label{cor:2}
    Let \(m \in \mathbb N\) and \(\Omega \subset \mathbb R^m\) be a domain. Then, the dual spaces satisfy the following inclusions
    \begin{equation}
        L^2(\Omega)^* \subseteq H^{-1}(\Omega) \subseteq H_0^{-1}(\Omega),
    \end{equation}
    and, for all \(u \in H^{-1}(\Omega)\), the associated norms obey
    \begin{equation}
        \label{eq:12}
        \|u\|_{H_0^{-1}(\Omega)} 
        \le 
        \|u\|_{H^{-1}(\Omega)} 
        \le 
        \|u\|_{L^2(\Omega)^*}.
    \end{equation}
\end{corollary}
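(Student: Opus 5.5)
The plan is to read all three spaces as sets of linear functionals acting through the $L^2$ pairing, and to compare their norms as suprema over nested families of test functions. The guiding principle is that a supremum over a smaller set of test functions is smaller, and that rescaling the normalization of the test functions from the $L^2$ norm to the stronger $H^1$ norm can only decrease the supremum.

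\textbf{Step 1 (restriction maps and inclusions).} I will first make the inclusions precise. We have continuous embeddings
\begin{equation}
    H_0^1(\Omega)\hookrightarrow H^1(\Omega)\hookrightarrow L^2(\Omega),
\end{equation}
both with constant $1$, since $\|v\|_{L^2(\Omega)}\le \|v\|_{H^1(\Omega)}$ directly from Definition~\ref{def:sobolev_space}. Dualizing, i.e.\ restricting a functional to the smaller space, gives maps
\begin{equation}
    L^2(\Omega)^*\to H^{-1}(\Omega)\to H_0^{-1}(\Omega).
\end{equation}
These restriction maps are how the inclusions in Corollary~\ref{cor:2} are to be understood. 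For $u\in L^2(\Omega)$ I will use Riesz representation to identify $u$ with the functional $v\mapsto\langle u,v\rangle_{L^2(\Omega)}$.

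\textbf{Step 2 (right inequality).} For $u\in L^2(\Omega)^*$ and any $v\in H^1(\Omega)$ with $\|v\|_{H^1(\Omega)}=1$, we have $\|v\|_{L^2(\Omega)}\le 1$. Hence
\begin{equation}
    \langle u,v\rangle_{L^2(\Omega)}\le \sup_{\|w\|_{L^2(\Omega)}\le 1}\langle u,w\rangle_{L^2(\Omega)}=\|u\|_{L^2(\Omega)^*}.
\end{equation}
Taking the supremum over such $v$ yields $\|u\|_{H^{-1}(\Omega)}\le\|u\|_{L^2(\Omega)^*}$.

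\textbf{Step 3 (left inequality).} The admissible set $\{v\in H_0^1(\Omega):\|v\|_{H^1(\Omega)}=1\}$ is a subset of $\{v\in H^1(\Omega):\|v\|_{H^1(\Omega)}=1\}$. The supremum over the smaller set is therefore no larger, which gives $\|u\|_{H_0^{-1}(\Omega)}\le\|u\|_{H^{-1}(\Omega)}$.

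\textbf{Main obstacle.} The only real subtlety is interpretive. The maps in Step 1 are genuine set inclusions only after identifying elements via the pairing. In particular, $H^{-1}(\Omega)\to H_0^{-1}(\Omega)$ need not be injective, since a functional supported on the boundary vanishes on $H_0^1(\Omega)$. So the second ``$\subseteq$'' should be read as a norm-nonincreasing restriction map rather than a true embedding. I will state this convention explicitly. The norm inequalities, which are the substantive content of Corollary~\ref{cor:2}, hold regardless of this convention, and no boundedness or regularity of $\Omega$ is needed.
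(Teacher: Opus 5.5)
Your argument is correct. It is exactly the reasoning the paper leaves implicit, since it states the corollary without proof as an immediate consequence of the definitions: nested test sets give the left inequality, and $\|v\|_{L^2(\Omega)}\le\|v\|_{H^1(\Omega)}$ gives the right one. Your reading of the second inclusion as a non-injective restriction map also matches the paper's own boundary-functional example.

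One small addition: for $u\in H^{-1}(\Omega)\setminus L^2(\Omega)^*$, the right inequality should be read with $\|u\|_{L^2(\Omega)^*}=\sup_{v\in H^1(\Omega),\,\|v\|_{L^2(\Omega)}\le 1}u(v)=+\infty$, as the paper does in its subsequent example. It then holds trivially for all $u\in H^{-1}(\Omega)$.
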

The following example demonstrates that the inequalities in equation~\eqref{eq:12} can in fact be strict.
\begin{example}
    Let \(m \in \mathbb N\) and \(\Omega \subset \mathbb R^m\) be a bounded domain with Lipschitz boundary, with \(\eta\) denoting the outward pointing unit normal field. Define the linear functional 
    \begin{equation}
        \ell: 
        H^1(\Omega) 
        \to 
        \mathbb R, 
        \quad 
        v 
        \mapsto
        \int_{\partial\Omega} v \eta_1 d^{m-1}\bm x,
    \end{equation}
    where \(\eta_1\) is the first component of the outward unit normal vector on \(\partial\Omega\). Clearly, \(\ell\) vanishes on \(H_0^1(\Omega)\). Hence, \(\ell \in H_0^{-1}(\Omega)\) and coincides with the zero functional, i.e. \(\|\ell\|_{H_0^{-1}(\Omega)} = 0\). However, \(\ell\) is non-trivial on \(H^1(\Omega)\). Moreover, for \(v \in H^1(\Omega)\), by using integration by parts and the Cauchy-Schwarz inequality, we obtain 
    \begin{equation}
        \ell(v) 
        = 
        \int_\Omega \partial_1 v d^m \bm x \leq |\Omega|^{1/2} \|v\|_{H^1(\Omega)}.
    \end{equation}
    Hence, 
    \(
        \ell \in H^{-1}(\Omega)
    \) with
    \(
        0 < \|\ell\|_{H^{-1}(\Omega)} \le |\Omega|^{1/2}
    \). 
    Further,
    \(
        \ell \notin L^2(\Omega)^*
    \), since there exists a sequence 
    \(
        v_n \in C^\infty(\overline \Omega)
    \) with \(\|v_n\|_{L^2(\Omega)}=1\) which yields 
    \(
        |\ell(v_n)| \to \infty
    \) as \(n \to \infty\). Thus, 
    \(
        \|\ell\|_{L^2(\Omega)^*} = \infty
    \).
\end{example}
A key property of the \(H^{-1}\)-norm applied to the functional
\begin{equation}
    \label{eq:13}
    \ell(v) 
    \coloneqq 
    \int_\Omega \operatorname{div}(\bm F) v d^m\bm x, 
    \quad 
    v \in H^1(\Omega),
\end{equation}
is that it controls the divergence of the vector field \(\bm{F}\), but at the cost of an additional \(L^2(\partial\Omega)\)-norm term.
\begin{corollary}
    \label{cor:3}
    Let \(m \in \mathbb N\) and \(\Omega \subset \mathbb R^m\) be a bounded domain with Lipschitz boundary, with \(\eta\) denoting the outward pointing unit normal field of \(\partial\Omega\). Further, let a vector field \(\bm F = (F_1, \ldots, F_m)\) where \(F_i \in H^1(\Omega)\). Then, the following inequality holds
    \begin{equation}
        \|\ell\|_{H^{-1}(\Omega)} 
        \le 
        \sum_{i=1}^m \left(\|F_i\|_{L^2(\Omega)} + \|F_i\eta_i\|_{L^2(\partial\Omega)}\right),
    \end{equation}
    where \(\ell\) was defined in \eqref{eq:13}.
\end{corollary}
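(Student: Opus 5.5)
We argue directly from the definition of the \(H^{-1}(\Omega)\)-norm as a supremum over \(v \in H^1(\Omega)\) with \(\|v\|_{H^1(\Omega)} = 1\). Fix such a \(v\) and split the integrand in \eqref{eq:13} componentwise, \(\operatorname{div}(\bm F) v = \sum_{i=1}^m (\partial_i F_i) v\). For each \(i\), apply the Gauss--Green (integration by parts) formula on the Lipschitz domain. It holds for \(F_i, v \in H^1(\Omega)\) by density of \(C^\infty(\overline\Omega)\) in \(H^1(\Omega)\) and continuity of the trace from Theorem~\ref{theo:trace_operator}, and gives
\begin{equation*}
    \int_\Omega (\partial_i F_i)\, v \, d^m\bm x
    =
    -\int_\Omega F_i\, \partial_i v \, d^m\bm x
    +
    \int_{\partial\Omega} \operatorname{tr}(F_i)\,\eta_i\, \operatorname{tr}(v)\, d^{m-1}\bm x .
\end{equation*}

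The volume term is handled by Cauchy--Schwarz: \(\bigl|\int_\Omega F_i \partial_i v\bigr| \le \|F_i\|_{L^2(\Omega)} \|\partial_i v\|_{L^2(\Omega)} \le \|F_i\|_{L^2(\Omega)}\), since \(\|\partial_i v\|_{L^2(\Omega)} \le \|v\|_{H^1(\Omega)} = 1\). The boundary term is bounded by Cauchy--Schwarz on \(\partial\Omega\) as \(\|F_i\eta_i\|_{L^2(\partial\Omega)}\,\|\operatorname{tr}(v)\|_{L^2(\partial\Omega)}\). Summing over \(i\) and taking the supremum over \(v\) then gives the claimed estimate, provided \(\|\operatorname{tr}(v)\|_{L^2(\partial\Omega)} \le 1\).

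That last factor is the main obstacle. Theorem~\ref{theo:trace_operator} only yields \(\|\operatorname{tr}(v)\|_{L^2(\partial\Omega)} \le C_{\mathrm{tr}} \|v\|_{H^1(\Omega)}\), with a trace constant \(C_{\mathrm{tr}}\) that depends on \(\Omega\) and is in general not \(\le 1\). For instance, \(v\equiv |\Omega|^{-1/2}\) gives \(\|\operatorname{tr}(v)\|_{L^2(\partial\Omega)} = \bigl(|\partial\Omega|/|\Omega|\bigr)^{1/2}\), which exceeds \(1\) whenever \(|\partial\Omega| > |\Omega|\). So the estimate as stated, with constant one, does not follow from this argument, and I do not expect to be able to prove it on general domains. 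What the argument does prove is
\begin{equation*}
    \|\ell\|_{H^{-1}(\Omega)}
    \le
    \sum_{i=1}^m \left(\|F_i\|_{L^2(\Omega)} + C_{\mathrm{tr}}\,\|F_i\eta_i\|_{L^2(\partial\Omega)}\right),
\end{equation*}
with \(C_{\mathrm{tr}}\) the norm of \(\operatorname{tr}: H^1(\Omega)\to L^2(\partial\Omega)\). This is the form I would record. The stated inequality holds exactly when \(C_{\mathrm{tr}} \le 1\), or else is to be read up to a domain-dependent constant, the same kind of constant hidden in ``\(\lesssim\)''. I would not try to force \(C_{\mathrm{tr}}=1\) in general, since the constant-function example shows that is impossible.

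Two remaining points are routine. Justifying the integration by parts for \(F_i \in H^1\) rather than smooth fields is done by approximation plus continuity of both sides in the \(H^1\) norms. The coarser bound \(\sum_i\) versus a combined Euclidean norm needs nothing beyond the triangle inequality.
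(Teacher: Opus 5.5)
Your argument is the paper's argument: Green's formula, Cauchy--Schwarz on the volume term, and Cauchy--Schwarz plus the trace theorem on the boundary term. The place where you stop is exactly where the paper's proof goes wrong. It bounds the boundary term directly by $\sum_i \|F_i\eta_i\|_{L^2(\partial\Omega)}\|v\|_{H^1(\Omega)}$, i.e.\ it silently takes $\|\operatorname{tr}(v)\|_{L^2(\partial\Omega)}\le \|v\|_{H^1(\Omega)}$. Your constant-function example already refutes this on the paper's own square $(-1,1)^2$, where $|\partial\Omega|=8>4=|\Omega|$.

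Your objection goes further than you claim: the inequality with constant one is false, not merely unproven. Take $\Omega=(-L,L)^2$ and $\bm F=(x,0)$, so $\operatorname{div}\bm F=1$ and $\ell(v)=\int_\Omega v$. Testing with $v\equiv 1$ gives $\|\ell\|_{H^{-1}(\Omega)}\ge |\Omega|^{1/2}=2L$. The right-hand side is $\|F_1\|_{L^2(\Omega)}+\|F_1\eta_1\|_{L^2(\partial\Omega)}=2L^2/\sqrt3+2L^{3/2}$, which is about $0.32<0.5$ at $L=1/4$.

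It even fails on $(-1,1)^2$. For $\bm F=(e^{-k(1-x)},0)$ and the test function $v=\cosh(x+1)$, one gets $\|\ell\|_{H^{-1}(\Omega)}\ge \sqrt{2\coth 2}-O(k^{-1})\approx 1.440$. The right-hand side is only $\sqrt2+O(k^{-1/2})$.

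So the estimate you record, with $C_{\mathrm{tr}}$ multiplying the boundary terms, is the correct statement, and your proof of it is complete. One phrase needs fixing: ``holds exactly when $C_{\mathrm{tr}}\le 1$''. Your argument shows that this condition is sufficient, not that it is necessary.
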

\begin{proof}
    Given \(v \in C^1(\Omega) \setminus \{0\}\), the statement is a direct consequence of a combination of the divergence theorem with the Cauchy-Schwarz inequality and the trace theorem
    \begin{equation}
        |\ell(v)|
        =
        \left|\int_\Omega (\bm{F} \cdot \nabla v )d^m \bm x - \int_{\partial\Omega} (\bm{F} \cdot \eta) v d^{m-1} \bm x \right|
        \leq
        \sum_{i=1}^m \left(\|F_i\|_{L^2(\Omega)}  + \|F_i \cdot \eta_i\|_{L^2(\partial\Omega)}\right) \|v\|_{H^1(\Omega)}.
    \end{equation}
    Divide both sides by \(\|v\|_{H^1(\Omega)} \neq 0\), take the supremum over all \(v \in C^1(\Omega) \setminus \{0\}\) to obtain the desired inequality by density of \(C^1(\Omega) \setminus \{0\}\) in \(H^1(\Omega)\).
\end{proof}

In conclusion, the \(H^{-1}(\Omega)\)-norm is the natural choice for capturing functionals acting on \(H^1(\Omega)\). This choice is particularly appropriate for least-squares formulations of second-order linear elliptic operators, as we will see next.

\section{PDE Theory}

In the following, we recall the classic functional-analytic setting of PDEs~\cite{Lions1972}. We revisit the well-posedness of the associated loss functional by summarizing standard results in the infinite-dimensional setting that guarantee coercivity and continuity of the minimization problem, thereby providing the foundation for subsequent discretizations.

\subsection{Second-order linear elliptic boundary value problem}

Second-order linear elliptic boundary value problems serve as a fundamental class, because of their wide range of applications and because they often appear as subproblems in more complex systems \cite{Jost2007, Brezis2011}.

We start with the following definition.

\begin{definition}[Second-order linear elliptic boundary value problems]
    \label{def:bvp}
    Let \(m \in \mathbb N\) and \(\Omega \subset \mathbb R^m\) be a bounded domain with Lipschitz boundary, where \(\eta\) denotes the outward pointing unit normal field of \(\partial\Omega\). Further, let \(\bm A \in L^\infty(\Omega)^{m \times m}\) be uniformly elliptic, that is, there exist constants \(C_0, C_1 > 0\) such that for all vectors \(\bm y \in \mathbb R^m\)
    \begin{equation}
        \label{eq:7}
        C_0 \bm y^\top \bm y \leq \bm y^\top \bm A(\bm x) \bm y \leq C_1 \bm y^\top \bm y,
    \end{equation}
    for almost all \(\bm x \in \Omega\). Moreover, given \(u \in H^1(\Omega)\) with \(\eta \cdot \bm A \nabla u \in L^2(\partial\Omega)\), we reformulate the definition of \(\operatorname{div}(\bm A \nabla u)\) by means of \emph{Green's formula} in the distributional sense by
    \begin{equation}
        \label{eq:8}
        \int_\Omega \operatorname{div}(\bm{A} \nabla u) v d^m\bm x 
        = 
        \int_\Omega - \bm{A} \nabla u \cdot \nabla v d^m\bm x  + \int_{\partial\Omega} (\eta \cdot \bm{A}\nabla u) v d^{m-1}\bm x ,
    \end{equation}
    for all test functions \(v \in H^1(\Omega) \). We consider the \(m\)-dimensional inhomogeneous second-order linear elliptic Dirichlet boundary value problem in \(\Omega\) for \(f \in L^2(\Omega)\) and \(g \in H^{1/2}(\partial\Omega)\). That is finding a solution \(u \in H^1(\Omega)\) such that 
    \begin{equation}
        \label{eq:bvp}
         \begin{cases}
            Tu 
            &= 
            f \text{ in } \Omega, \\
            u 
            &= 
            g \text{ on } \partial\Omega,
        \end{cases}
    \end{equation}
    where the operator \(T\) is given by
    \begin{equation}
        T: H^1(\Omega) \to H^{-1}(\Omega), 
        \quad 
        u \mapsto \operatorname{div} (\bm{A} \nabla u) + \bm{b} \cdot \nabla u + cu,
    \end{equation}
    with \(\bm b \in L^\infty(\Omega)^m\), and \(c \in L^\infty(\Omega)\).
\end{definition}

\subsection{Residual loss functional}
We reformulate the Dirichlet boundary value problem in \eqref{eq:bvp} as a global optimization problem by introducing a soft-constrained PDE residual loss functional.
\begin{definition}
    \label{def:residual_loss_functional}
    Let the assumptions of Definition~\ref{def:bvp} be satisfied. We define the residual loss functional \(\mathcal L: H^1(\Omega) \to \mathbb R_0^+\) as
    \begin{equation}
        \label{eq:residual_loss_functional}
        \mathcal L(u) 
        = 
        \frac{1}{2} \|Tu-f\|_{H^{-1}(\Omega)}^2 + \frac{1}{2} \|\operatorname{tr}(u) - g\|_{H^{1/2}(\partial\Omega)}^2.
    \end{equation}
\end{definition}
The loss measures the squared PDE residual together with a penalty term enforcing the boundary condition in a weak sense, yielding a non-negative quadratic functional. Under suitable assumptions, it can be shown that the global minimizer of this functional corresponds to the solution of the underlying boundary value problem.

\subsection{Well-posedness}
The following result originates from Lions and Magenes \cite{Lions1972} and can also be reformulated in higher-order Sobolev spaces.
\begin{theorem}[Lions and Magenes]
    \label{theo:lions_and_magenes}
    Let the assumptions of Definition~\ref{def:bvp} be satisfied and assume that every solution to \eqref{eq:bvp} is unique in \(H^1(\Omega)\). That is, if \(u \in H^1(\Omega)\) such that
    \begin{equation}
        \|Tu\|_{H^{-1}(\Omega)} + \|\operatorname{tr}(u)\|_{H^{1/2}(\partial\Omega)} = 0,
    \end{equation}
    it implies \(\|u\|_{H^1(\Omega)} = 0\). Then, there exist constants \(C_0=C_0(\Omega, \bm{b}, c), C_1=C_1(\Omega, \bm{b}, c)>0\), depending on the domain and the data, such that for all \(u \in H^1(\Omega)\)
    \begin{equation}
        \label{eq:9}
        C_0 \|u\|_{H^1(\Omega)} \leq \|Tu\|_{H^{-1}(\Omega)} + \|\operatorname{tr}(u)\|_{H^{1/2}(\partial\Omega)} \leq C_1 \|u\|_{H^1(\Omega)}.
    \end{equation}
\end{theorem}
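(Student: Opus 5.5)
The upper bound follows directly from the boundedness of the individual operators. For \(u \in H^1(\Omega)\) and \(v \in H^1(\Omega)\) with \(\|v\|_{H^1(\Omega)}=1\), the pairing \(\langle Tu, v\rangle\) is understood through Green's formula \eqref{eq:8}:
\begin{equation*}
    \langle Tu, v\rangle = -\int_\Omega \bm A\nabla u\cdot\nabla v \, d^m\bm x + \int_\Omega (\bm b\cdot\nabla u + cu)\, v \, d^m\bm x ,
\end{equation*}
where the boundary flux term is handled as in Corollary~\ref{cor:3}. Cauchy--Schwarz together with \(\bm A,\bm b, c \in L^\infty\) bounds this by \(\big(C_1 + \|\bm b\|_{L^\infty} + \|c\|_{L^\infty}\big)\|u\|_{H^1(\Omega)}\). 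More precisely, \(T\) is realised as the operator \(H^1 \to H^{-1}\) associated with the bounded bilinear form. Theorem~\ref{theo:trace_operator} gives \(\|\operatorname{tr}(u)\|_{H^{1/2}(\partial\Omega)} \le C_{\mathrm{tr}}\|u\|_{H^1(\Omega)}\). Adding the two estimates yields the right-hand inequality in \eqref{eq:9}.

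For the lower bound I would first reduce to homogeneous boundary data. Given \(u\), set \(w = u - E(\operatorname{tr}(u))\) with the extension operator \(E\) of Theorem~\ref{theo:trace_operator}. Then \(w\in H^1_0(\Omega)\), and \(\|E\operatorname{tr}(u)\|_{H^1} \le C_E \|\operatorname{tr}(u)\|_{H^{1/2}}\). By the upper bound,
\begin{equation*}
    \|Tw\|_{H^{-1}} \le \|Tu\|_{H^{-1}} + C\,\|\operatorname{tr}(u)\|_{H^{1/2}} .
\end{equation*}
It therefore suffices to prove \(\|w\|_{H^1} \le C\|Tw\|_{H^{-1}}\) for \(w\in H^1_0\). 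Since \(\|\cdot\|_{H_0^{-1}} \le \|\cdot\|_{H^{-1}}\) by Corollary~\ref{cor:2}, it is enough to prove the estimate with the \(H^{-1}_0\) norm. That is, I need injectivity with closed range for \(T|_{H^1_0}: H^1_0 \to H^{-1}_0\).

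For this I use Gårding's inequality. Uniform ellipticity together with Young's inequality on the \(\bm b\)-term gives
\begin{equation*}
    -\langle Tw, w\rangle \ge \tfrac{C_0}{2}\|\nabla w\|_{L^2}^2 - \lambda \|w\|_{L^2}^2 ,
\end{equation*}
so \(\lambda - T\) is coercive on \(H^1_0\). By Lax--Milgram it is an isomorphism \(H^1_0\to H^{-1}_0\). Writing \(T = (T-\lambda) + \lambda\iota\), where \(\iota\) is the compact embedding \(H^1_0 \hookrightarrow L^2 \hookrightarrow H^{-1}_0\) (Rellich, valid for bounded domains), shows that \(T\) is Fredholm of index zero. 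The uniqueness hypothesis, applied to \(w\) with \(Tw=0\) and \(\operatorname{tr}(w)=0\), gives injectivity, hence bounded invertibility. This yields \(\|w\|_{H^1}\le C\|Tw\|_{H^{-1}_0}\). Alternatively, one can argue by contradiction: take a normalised sequence with \(\|Tw_n\|\to0\), use Rellich and Gårding to extract a strongly convergent subsequence, and obtain a nonzero kernel element, which contradicts uniqueness.

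Combining the pieces, \(\|u\|_{H^1} \le \|w\|_{H^1} + C_E\|\operatorname{tr}(u)\|_{H^{1/2}}\), which gives the left inequality in \eqref{eq:9}. The main obstacle is the compactness and Fredholm step, together with being careful that the definition of \(T\) into the dual of \(H^1\) (rather than of \(H^1_0\)) is consistent with Green's formula. I would handle the latter by working only with \(H^1_0\) test functions in the lower bound, which Corollary~\ref{cor:2} permits.
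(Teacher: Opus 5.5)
The paper does not prove this theorem; it only cites \cite{Lions1972}, where \(H^{-1}\) denotes the dual of \(H^1_0\). Your proposal therefore has to stand on its own, and the gap is in the right-hand inequality. You flag the \(H^1\)-versus-\(H^1_0\) issue at the end, but it is the upper bound that it breaks.

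For \(T\) as in Definition~\ref{def:bvp}, the pairing with \(v\in H^1(\Omega)\) is given by Green's formula \eqref{eq:8}. It contains the flux term \(\int_{\partial\Omega}(\eta\cdot\bm A\nabla u)\,v\,d^{m-1}\bm x\), which does not vanish for \(v\notin H^1_0(\Omega)\) and is not controlled by \(\|u\|_{H^1(\Omega)}\). Corollary~\ref{cor:3} cannot absorb it: it requires \(\bm A\nabla u\in H^1(\Omega)^m\) and produces \(\|\eta\cdot\bm A\nabla u\|_{L^2(\partial\Omega)}\), an \(H^2\)-type quantity. This is exactly the stiffness of Proposition~\ref{prop:continuity}.

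In fact no \(C_1\) exists. On \(\Omega=(-1,1)\) with \(Tu=u''\), the functions \(u_\varepsilon(x)=\varepsilon e^{(x-1)/\varepsilon}\) satisfy \(\|u_\varepsilon\|_{H^1(\Omega)}=O(\sqrt\varepsilon)\). Testing with \(v\equiv1\) gives \(\|u_\varepsilon''\|_{H^{-1}(\Omega)}\ge(1-e^{-2/\varepsilon})/\|1\|_{H^1(\Omega)}\). The same functions of \(x_1\) work on \((-1,1)^m\).

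Your displayed formula for \(\langle Tu,v\rangle\) silently drops the flux term. It thereby replaces \(T\) by the Neumann-type operator \(u\mapsto a(u,\cdot)\in H^1(\Omega)^*\), with \(a(u,v)=-\int_\Omega\bm A\nabla u\cdot\nabla v+\int_\Omega(\bm b\cdot\nabla u+cu)\,v\). That operator is bounded, but it is not the \(T\) of the statement. For it, the residual of the exact solution tested against \(v\in H^1(\Omega)\) equals \(-\int_{\partial\Omega}(\eta\cdot\bm A\nabla u_*)\,v\), not zero.

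If you adopt the bilinear-form operator consistently, the upper bound is fine, but the lower bound then breaks where you invoke uniqueness. In that reading the hypothesis only excludes nonzero \(w\in H^1_0(\Omega)\) with \(a(w,v)=0\) for \emph{all} \(v\in H^1(\Omega)\). This is weaker than the injectivity of \(T|_{H^1_0}:H^1_0\to H^{-1}_0\) that your Fredholm step uses.

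Take \(T=\Delta+\lambda_1\), with \(\lambda_1\) the first Dirichlet eigenvalue. The Dirichlet kernel is spanned by \(\phi_1>0\), but \(a(\phi_1,1)=\lambda_1\int_\Omega\phi_1\neq0\). So the stated hypothesis holds, while your reduced estimate \(\|w\|_{H^1}\le C\|Tw\|_{H^{-1}_0}\) fails at \(w=\phi_1\). Hence you must not pass to \(H^{-1}_0\) via Corollary~\ref{cor:2} before using uniqueness.

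Your compactness alternative does work if you run it with \(\|Tw_n\|_{H^1(\Omega)^*}\to0\). G{\aa}rding applied to \(w_n-w_k\in H^1_0\) still gives an \(H^1\)-Cauchy subsequence, and its limit then contradicts the hypothesis as stated.

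What your argument proves correctly, essentially as written, is the classical estimate with \(\|Tu\|_{H^{-1}_0(\Omega)}\) in place of \(\|Tu\|_{H^{-1}(\Omega)}\). There the flux term vanishes on the test space, the upper bound follows from the bilinear form, and the hypothesis is exactly Dirichlet uniqueness. The lifting step is also legitimate there, provided you estimate \(T E(\operatorname{tr}(u))\) in the \(H^{-1}_0\) norm rather than through the unavailable \(H^{-1}\) upper bound.
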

% \begin{proof}
%     We provide a mainly self-contained proof of the theorem in the stated form in Appendix~\ref{app:lions_and_magenes}.
% \end{proof}
As a consequence, the residual loss functional from equation~\eqref{eq:residual_loss_functional} is equivalent to the squared \(H^1\)-norm on \(\Omega\). In particular, Theorem~\ref{theo:lions_and_magenes} delivers a two-sided error control for the residual loss functional of Definition~\ref{def:residual_loss_functional}, establishing well-posedness of the associated minimization problem in \(H^1(\Omega)\).
\begin{corollary}[Coercivity and Continuity]              
    \label{cor:residual_loss_functional}
    Let the assumptions of Definition~\ref{def:residual_loss_functional} be satisfied and \(u_* \in H^1(\Omega)\) a solution to \eqref{eq:bvp}. Then, \(\mathcal L\) is a quadratic form satisfying for all \(u \in H^1(\Omega)\)
    \begin{equation}
        C_0 \|u - u_*\|_{H^1(\Omega)}^2 \leq \mathcal L(u) \leq C_1\|u - u_*\|_{H^1(\Omega)}^2,
    \end{equation}
    with constants \(C_0 = C_0(\Omega, \bm{b}, c), C_1 = C_1(\Omega, \bm{b}, c) > 0\), depending on the domain and the data.
\end{corollary}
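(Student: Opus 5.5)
The plan is to reduce the corollary to Theorem~\ref{theo:lions_and_magenes} applied to the error \(e \coloneqq u - u_*\). Since \(u_*\) solves \eqref{eq:bvp}, we have \(Tu_* = f\) in \(H^{-1}(\Omega)\) and \(\operatorname{tr}(u_*) = g\) in \(H^{1/2}(\partial\Omega)\). Linearity of \(T\) and of \(\operatorname{tr}\) then gives
\begin{equation*}
    Tu - f = Te, \qquad \operatorname{tr}(u) - g = \operatorname{tr}(e),
\end{equation*}
so that
\begin{equation*}
    \mathcal L(u) = \tfrac12 \|Te\|_{H^{-1}(\Omega)}^2 + \tfrac12 \|\operatorname{tr}(e)\|_{H^{1/2}(\partial\Omega)}^2 .
\end{equation*}
This representation also gives the claim that \(\mathcal L\) is a quadratic form. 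Indeed, \(u \mapsto \mathcal L(u + u_*)\) is the sum of squares of the norms of two linear maps, each applied to a Hilbert-space norm. The space \(H^{-1}(\Omega)\) is a Hilbert space, being the dual of a Hilbert space, and \(H^{1/2}(\partial\Omega)\) is a Hilbert space with the Gagliardo inner product. So \(\mathcal L\) is a (shifted) quadratic form.

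Next I convert the linear two-sided bound \eqref{eq:9} into a quadratic one. Write \(a = \|Te\|_{H^{-1}(\Omega)}\) and \(b = \|\operatorname{tr}(e)\|_{H^{1/2}(\partial\Omega)}\). The elementary inequalities
\begin{equation*}
    \tfrac12 (a+b)^2 \le a^2 + b^2 \le (a+b)^2,
\end{equation*}
valid for \(a,b \ge 0\), give
\begin{equation*}
    \tfrac14 (a+b)^2 \le \mathcal L(u) \le \tfrac12 (a+b)^2 .
\end{equation*}
Inserting \eqref{eq:9} for \(e \in H^1(\Omega)\) yields
\begin{equation*}
    \tfrac14 C_0^2 \|e\|_{H^1(\Omega)}^2 \le \mathcal L(u) \le \tfrac12 C_1^2 \|e\|_{H^1(\Omega)}^2 .
\end{equation*}
Renaming the constants as \(\tilde C_0 = C_0^2/4\) and \(\tilde C_1 = C_1^2/2\) gives the statement, and these constants depend only on \(\Omega\), \(\bm b\), \(c\) (and \(\bm A\)).

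The only genuine point to check is that Theorem~\ref{theo:lions_and_magenes} applies. It requires the uniqueness hypothesis, which is implicit in the corollary's reference to "the assumptions" and the existence of a solution \(u_*\). I will state explicitly that we assume uniqueness of solutions to \eqref{eq:bvp} in \(H^1(\Omega)\), as in Theorem~\ref{theo:lions_and_magenes}. I will also note that uniqueness makes \(u_*\) well-defined, since the lower bound forces any two minimizers with zero loss to coincide.

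Two smaller points need care. First, \(Tu_* = f\) is understood in \(H^{-1}(\Omega)\) via Green's formula \eqref{eq:8}, which is consistent with the definition of \(T\). Second, \(f \in L^2(\Omega)\) embeds in \(H^{-1}(\Omega)\) by Corollary~\ref{cor:2}, so the residual \(Tu - f\) lies in \(H^{-1}(\Omega)\). This makes \(\mathcal L\) finite on \(H^1(\Omega)\), so the quadratic bounds are meaningful.
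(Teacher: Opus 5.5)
Your proposal is correct and follows the paper's route. The paper derives the corollary directly from Theorem~\ref{theo:lions_and_magenes} applied to the error \(u-u_*\), using \(Tu_*=f\) and \(\operatorname{tr}(u_*)=g\), and then squares the two-sided bound, exactly as you do via \(\frac{1}{2}(a+b)^2\le a^2+b^2\le (a+b)^2\). You are also right to state the uniqueness hypothesis of Theorem~\ref{theo:lions_and_magenes} explicitly (and to note the dependence on \(\bm A\)), since the corollary omits it and the lower bound fails without it: \(u=u_*+z\) with a nontrivial null solution \(z\) has zero loss.
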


\section{Approximations of the negative-order Sobolev norm}
\label{sec:approximations_of_the_negative_order_sobolev_norm}
In this section, we define two seminorms obtained by restricting the negative-order Sobolev norm to suitable subspaces. These are the subspace-induced negative-order Sobolev seminorm (SINOS) in Section~\ref{sec:subspace_induced_negative_order_sobolev_seminorm} and the dual subspace-induced negative-order Sobolev seminorm (D-SINOS) in Section~\ref{sec:dual_subspace_induced_negative_order_sobolev_seminorm}, cf. \cite{Suarez2024}.

Our aim is to approximate the \(H^{-1}\)-norm by means of these subspaces to mitigate stiffness effects arising in PINNs. 
We establish coercivity constants that hold for arbitrary elements and continuity constants with respect to the gradient and the Laplace operator. 

Although we obtain a valid upper bound for SINOS, it generally overestimates boundary terms. In contrast, D-SINOS induces a rescaling under which the gradient is continuous with constant one in the \(L^2\)-norm, and the Laplace operator is continuous with constant one in the \(H^{1}\)-seminorm. Later in Section~\ref{sec:discretization_of_the_residual_loss_functional}, computing the associated Rayleigh quotients shows that the rescaling of D-SINOS improves the behavior of the operator rather than simply re-normalizing the coercivity constant.

\subsection{Subspace-induced negative-order Sobolev seminorm}
\label{sec:subspace_induced_negative_order_sobolev_seminorm}
We prove a uniform lower bound in Proposition~\ref{prop:coercivity}, showing that boundary terms are not suppressed by the choice of the present seminorm. Proposition~\ref{prop:continuity} shows that the Laplace operator fails to be uniformly continuous, reflecting that the contribution of boundary terms may lead to an amplification.

Let \(m \in \mathbb N\) be the spatial dimension and let \(\Omega \subset \mathbb R^m\) be a bounded domain. We introduce the following subspace-induced seminorm on \(H^{-1}(\Omega)\).
\begin{definition}
    \label{def:seminorm}
    Let \(V \subset H^1(\Omega)\) be a closed linear subspace. For \(u \in H^{-1}(\Omega)\), we define the standard first negative-order Sobolev seminorm associated with \(V\) by
    \begin{equation}
        |u|_{-1, V} \coloneqq \sup_{v \in V: \, \|v\|_{H^1(\Omega)} = 1} \langle u, v\rangle_{L^2(\Omega)}.
    \end{equation}
\end{definition}

By definition, \(|u|_{-1, V}\) is obtained by taking the supremum over test functions in \(V\). Since \(V \subset H^1(\Omega)\), this supremum is taken over a smaller space than in the definition of \(\|u\|_{H^{-1}(\Omega)}\), and therefore yielding
\(
    |u|_{-1, V} \leq \|u\|_{H^{-1}(\Omega)}.
\) 
Even if \(u \in V\), the full norm \(\|u\|_{H^{-1}(\Omega)} \) generally differs from \(|u|_{-1, V}\), making the inequality in many cases strict. This motivates the following decomposition into the component associated with the subspace \(V\) and the component associated with its \(H^1\)-orthogonal complement \(V^\perp\). 
\begin{theorem}[Orthogonal decomposition]
    \label{theo:negative_norm_decomposition}
    Let \(V \subset H^1(\Omega)\) be a closed linear subspace and denote by \(V^\perp\) its orthogonal complement in \(H^1(\Omega)\). Then, for \(u \in H^1(\Omega)\), it holds that
    \begin{equation}
        \|u\|_{H^{-1}(\Omega)} = \sqrt{|u|_{-1, V}^2 + |u|_{-1, V^\perp}^2}.
    \end{equation}
\end{theorem}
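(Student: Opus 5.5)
The plan is to use the Riesz representation theorem in the Hilbert space \(H^1(\Omega)\) and reduce the statement to Pythagoras. For \(u \in H^1(\Omega)\) (indeed, for any \(u \in L^2(\Omega)\)), the map \(\ell_u : v \mapsto \langle u, v\rangle_{L^2(\Omega)}\) is a bounded linear functional on \(H^1(\Omega)\). This holds because \(|\langle u,v\rangle_{L^2}| \le \|u\|_{L^2}\|v\|_{L^2} \le \|u\|_{L^2}\|v\|_{H^1}\). Hence there is a unique representer \(w_u \in H^1(\Omega)\) with \(\langle u, v\rangle_{L^2(\Omega)} = \langle w_u, v\rangle_{H^1(\Omega)}\) for all \(v \in H^1(\Omega)\), and \(\|u\|_{H^{-1}(\Omega)} = \|w_u\|_{H^1(\Omega)}\).

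Next I will show that the restricted suprema are norms of orthogonal projections of \(w_u\). Let \(P\) denote the \(H^1\)-orthogonal projection onto the closed subspace \(V\), and \(I-P\) the projection onto \(V^\perp\). For \(v \in V\) we have \(\langle w_u, v\rangle_{H^1} = \langle P w_u, v\rangle_{H^1}\). Taking the supremum over \(v\in V\) with \(\|v\|_{H^1}=1\), Cauchy--Schwarz gives \(|u|_{-1,V} \le \|P w_u\|_{H^1}\). Equality follows by choosing \(v = P w_u/\|P w_u\|_{H^1}\) when \(P w_u \neq 0\); the case \(P w_u = 0\) is trivial, since then both sides vanish. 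Since the unit sphere is symmetric, the supremum of the signed pairing equals the supremum of its absolute value, so no sign issue arises. The identical argument with \(V^\perp\), which is closed, gives \(|u|_{-1,V^\perp} = \|(I-P)w_u\|_{H^1}\).

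Finally, \(w_u = P w_u + (I-P) w_u\) with the two summands \(H^1\)-orthogonal, so the Pythagorean theorem yields \(\|w_u\|_{H^1}^2 = \|P w_u\|_{H^1}^2 + \|(I-P)w_u\|_{H^1}^2\). Substituting the identifications above gives \(\|u\|_{H^{-1}(\Omega)}^2 = |u|_{-1,V}^2 + |u|_{-1,V^\perp}^2\), and taking square roots completes the argument.

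The only step needing care is the identification of the sup over \(V\) with \(\|Pw_u\|_{H^1}\), including the degenerate case and attainment of the supremum. Everything else is standard Hilbert space theory, and the closedness of \(V\) is exactly what guarantees that the projection \(P\) exists.
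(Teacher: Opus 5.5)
Your proof is correct, but it takes a different route from the paper's.

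\textbf{The paper's route.} It never introduces a Riesz representer. It splits an arbitrary \emph{test function} as \(v+w\) with \(v\in V\), \(w\in V^\perp\). Writing \(\alpha=|u|_{-1,V}\) and \(\beta=|u|_{-1,V^\perp}\), it bounds \(\langle u, v+w\rangle_{L^2(\Omega)} \le \alpha\|v\|_{H^1(\Omega)}+\beta\|w\|_{H^1(\Omega)}\). The upper bound \(\sqrt{\alpha^2+\beta^2}\) then follows from Pythagoras in \(H^1(\Omega)\) and Cauchy--Schwarz in \(\mathbb R^2\). For the lower bound, it takes near-maximizing unit sequences \(v_n\in V\), \(w_n\in V^\perp\) and tests against \((\alpha v_n+\beta w_n)/\sqrt{\alpha^2+\beta^2}\).

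\textbf{Your route.} You move the splitting from the test function to the representer \(w_u\) of \(v\mapsto\langle u,v\rangle_{L^2(\Omega)}\). Both seminorms then become norms of orthogonal projections, \(\|Pw_u\|_{H^1(\Omega)}\) and \(\|(I-P)w_u\|_{H^1(\Omega)}\), and the identity is literally Pythagoras.

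\textbf{What each buys.} Your version gives more structure. It shows the suprema are attained and identifies the maximizers \(Pw_u/\|Pw_u\|_{H^1(\Omega)}\) and \((I-P)w_u/\|(I-P)w_u\|_{H^1(\Omega)}\). It also avoids limiting arguments and the normalization by \(\sqrt{\alpha^2+\beta^2}\) (with its separate case \(\alpha=\beta=0\)). Finally, it matches the representer viewpoint the paper itself adopts later for D-SINOS in Proposition~\ref{prop:laplacian}, where \(|u|_{-1,V^*}=\|Mu\|_{1,V^*}\).

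The paper's argument is more elementary. It uses only the orthogonal splitting \(H^1(\Omega)=V\oplus V^\perp\) and needs neither the Riesz theorem nor an a priori bound on the functional. The check \(|\langle u,v\rangle_{L^2(\Omega)}|\le\|u\|_{L^2(\Omega)}\|v\|_{H^1(\Omega)}\) that you correctly supply is not required there.
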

\begin{proof}
    We refer to Appendix~\ref{app:negative_norm_decomposition} for the proof.
\end{proof}
Next, we establish a lower bound, depending on specific Rayleigh quotients, for the seminorm \(|\cdot|_{-1,V}\) in terms of the \(H^{-1}\)-norm.
\begin{proposition}[Coercivity]
    \label{prop:coercivity}
    Let \(V \subset H^1(\Omega)\) be a closed linear subspace and denote by \(V^\perp\) its orthogonal complement in \(H^1(\Omega)\). Then, for \(u \in H^1(\Omega)\) it holds that
    \begin{equation}
        \label{eq:14}
        \|u\|_{H^{-1}(\Omega)} \le |u|_{-1, V} + \left(\sup_{w \in V^\perp \setminus \{0\}} \frac{\|w\|_{L^2(\Omega)}}{\|w\|_{H^1(\Omega)}}\right) \|u\|_{L^2(\Omega)}.
    \end{equation}
    Further, if \(u \in V\), then
    \begin{equation}
        \label{eq:15}
        \|u\|_{H^{-1}(\Omega)} 
        \le 
        \left(1 + \sup_{v \in V \setminus \{0\}} \frac{\|v\|_{H^1(\Omega)}}{\|v\|_{L^2(\Omega)}} \sup_{w \in V^\perp \setminus \{0\}} \frac{\|w\|_{L^2(\Omega)}}{\|w\|_{H^1(\Omega)}}\right) |u|_{-1, V}.
    \end{equation}
\end{proposition}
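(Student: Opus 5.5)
Both estimates follow from the orthogonal decomposition in Theorem~\ref{theo:negative_norm_decomposition} together with two elementary bounds on the complementary seminorm $|u|_{-1,V^\perp}$. For \eqref{eq:14} I first apply Theorem~\ref{theo:negative_norm_decomposition} to get $\|u\|_{H^{-1}(\Omega)} = \sqrt{|u|_{-1,V}^2 + |u|_{-1,V^\perp}^2}$. The elementary inequality $\sqrt{a^2+b^2}\le a+b$ for $a,b\ge 0$ then gives $\|u\|_{H^{-1}(\Omega)}\le |u|_{-1,V}+|u|_{-1,V^\perp}$. It remains to bound the second term. For $w\in V^\perp$ with $\|w\|_{H^1(\Omega)}=1$, the Cauchy--Schwarz inequality in $L^2(\Omega)$ yields
\begin{equation*}
\langle u,w\rangle_{L^2(\Omega)}\le \|u\|_{L^2(\Omega)}\|w\|_{L^2(\Omega)} = \|u\|_{L^2(\Omega)}\frac{\|w\|_{L^2(\Omega)}}{\|w\|_{H^1(\Omega)}}.
\end{equation*}
Taking the supremum over such $w$ gives $|u|_{-1,V^\perp}\le \bigl(\sup_{w\in V^\perp\setminus\{0\}}\|w\|_{L^2}/\|w\|_{H^1}\bigr)\|u\|_{L^2(\Omega)}$, and this is \eqref{eq:14}.

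For \eqref{eq:15}, with $u\in V$, it suffices to bound $\|u\|_{L^2(\Omega)}$ by a multiple of $|u|_{-1,V}$, and then insert that bound into \eqref{eq:14}. The idea is to test with $u$ itself. If $u\neq 0$, then $v=u/\|u\|_{H^1(\Omega)}$ is admissible in Definition~\ref{def:seminorm}, and so
\begin{equation*}
|u|_{-1,V}\ge \frac{\langle u,u\rangle_{L^2(\Omega)}}{\|u\|_{H^1(\Omega)}} = \|u\|_{L^2(\Omega)}\,\frac{\|u\|_{L^2(\Omega)}}{\|u\|_{H^1(\Omega)}}.
\end{equation*}
Rearranging gives
\begin{equation*}
\|u\|_{L^2(\Omega)}\le \frac{\|u\|_{H^1(\Omega)}}{\|u\|_{L^2(\Omega)}}\,|u|_{-1,V}\le \Bigl(\sup_{v\in V\setminus\{0\}}\frac{\|v\|_{H^1(\Omega)}}{\|v\|_{L^2(\Omega)}}\Bigr)|u|_{-1,V}.
\end{equation*}
Substituting this into \eqref{eq:14} produces exactly the factor $1+\sup_V(\cdot)\sup_{V^\perp}(\cdot)$ in \eqref{eq:15}. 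The case $u=0$ is trivial.

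There is no serious obstacle; the only care needed is in the edge cases. If a supremum is infinite, which can happen for infinite-dimensional $V$, the inequality holds trivially. Since $\|w\|_{L^2}\le\|w\|_{H^1}$, the $V^\perp$ supremum is at most one, and it is defined as zero when $V^\perp=\{0\}$, which is consistent with the statement. The substantive input is the decomposition of Theorem~\ref{theo:negative_norm_decomposition}, which is taken as given here.
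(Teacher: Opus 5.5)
Your proof is correct and follows the paper's argument: the paper also combines the orthogonal decomposition with the elementary bound $\sqrt{\alpha^2+\beta^2}\le\alpha+\beta$ (written there as $|\sqrt{a}-\sqrt{b}|\le\sqrt{a-b}$) and then applies Cauchy--Schwarz on $V^\perp$ to get \eqref{eq:14}. For \eqref{eq:15}, the paper writes $\|u\|_{L^2(\Omega)}$ as the supremum of $\langle u,v\rangle_{L^2(\Omega)}/\|v\|_{L^2(\Omega)}$ over $v\in V$ and bounds each quotient, which is the same as your choice to test with $v=u$, since that supremum is attained at $v=u$.
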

\begin{proof}
    Let \(u \in H^1(\Omega)\). Applying Theorem~\ref{theo:negative_norm_decomposition} and using the fact that \(|\sqrt{a} - \sqrt{b}| \leq |\sqrt{a-b}|\) for any \(a>b>0\), yields 
    \begin{equation}
        \left|\|u\|_{H^{-1}(\Omega)} - |u|_{-1, V}\right| 
        = 
        \left|\sqrt{|u|_{-1, V}^2 + |u|_{-1, V^\perp}^2} - \sqrt{|u|_{-1, V}^2} \right| 
        \le 
        |u|_{-1,V^\perp}.
    \end{equation}
    Further, by recalling Definition~\ref{def:seminorm} and applying the Cauchy-Schwarz inequality, we obtain
    \begin{equation}
        |u|_{-1, V^\perp} 
        = 
        \sup_{w \in V^\perp \setminus \{0\}} \frac{\langle u, w \rangle_{L^2(\Omega)}}{\|w\|_{H^1(\Omega)}} 
        \le 
        \left(\sup_{w \in V^\perp \setminus \{0\}} \frac{\|w\|_{L^2(\Omega)}}{\|w\|_{H^1(\Omega)}}\right) \|u\|_{L^2(\Omega)},
    \end{equation}
    completing the proof of the first inequality~\eqref{eq:14}.
    Now, let \(u \in V\), then
    \begin{equation}
        \|u\|_{L^2(\Omega)} 
        = 
        \sup_{v \in V \setminus\{0\}} \frac{
            \langle u, v\rangle_{L^2(\Omega)}
        }{
            \|v\|_{L^2(\Omega)}
        } 
        \le 
        \left(\sup_{v \in V \setminus \{0\}} \frac{
            \|v\|_{H^1(\Omega)}
        }{
            \|v\|_{L^2(\Omega)}
        }\right) |u|_{-1, V},
    \end{equation}
    yields the second inequality~\eqref{eq:15} and completes the proof.
\end{proof}
We conclude with the following proposition, bounding the Laplace operator in \(|\cdot|_{-1, V}\).
\begin{proposition}[Stiff continuity of the Laplace operator]
    \label{prop:continuity}
    Let \(V \subset H^1(\Omega)\) be a closed linear subspace. Then, for \(u \in V \cap H^2(\Omega)\), it holds
    \begin{equation}
        |\Delta u|_{-1, V} \lesssim \|u\|_{H^2(\Omega)}.
    \end{equation}
\end{proposition}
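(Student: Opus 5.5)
Since \(|\cdot|_{-1,V}\) is a supremum over unit vectors of \(V\), it suffices to bound \(\langle \Delta u, v\rangle_{L^2(\Omega)}\) by a constant times \(\|u\|_{H^2(\Omega)}\|v\|_{H^1(\Omega)}\) for every \(v \in V\). Dividing by \(\|v\|_{H^1(\Omega)}\) and taking the supremum then gives the claim. The constant must not depend on \(V\), although it may depend on \(\Omega\).

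The first step applies Green's formula \eqref{eq:8} with \(\bm A = \bm I\). This is admissible because \(u \in H^2(\Omega)\), so that \(\nabla u \in H^1(\Omega)^m\) and \(\eta\cdot\nabla u \in L^2(\partial\Omega)\) by the trace theorem. It yields
\begin{equation*}
    \langle \Delta u, v\rangle_{L^2(\Omega)}
    =
    -\int_\Omega \nabla u\cdot\nabla v \, d^m\bm x
    +
    \int_{\partial\Omega} (\eta\cdot\nabla u)\, v \, d^{m-1}\bm x .
\end{equation*}
Equivalently, we may invoke Corollary~\ref{cor:3} with \(\bm F = \nabla u\), whose components lie in \(H^1(\Omega)\).

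The second step bounds the two terms separately. By Cauchy–Schwarz, the interior term is at most \(\|\nabla u\|_{L^2(\Omega)}\|v\|_{H^1(\Omega)} \le \|u\|_{H^2(\Omega)}\|v\|_{H^1(\Omega)}\). For the boundary term, Cauchy–Schwarz on \(\partial\Omega\) gives the bound \(\|\eta\cdot\nabla u\|_{L^2(\partial\Omega)}\|\operatorname{tr}(v)\|_{L^2(\partial\Omega)}\). We then use the boundedness of the trace \(H^1(\Omega)\to H^{1/2}(\partial\Omega)\hookrightarrow L^2(\partial\Omega)\) from Theorem~\ref{theo:trace_operator}. Since \(|\eta|=1\), this gives \(\|\eta\cdot\nabla u\|_{L^2(\partial\Omega)} \le \sum_i \|\operatorname{tr}(\partial_i u)\|_{L^2(\partial\Omega)} \lesssim \|u\|_{H^2(\Omega)}\), and likewise \(\|\operatorname{tr}(v)\|_{L^2(\partial\Omega)}\lesssim \|v\|_{H^1(\Omega)}\). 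Combining the two terms gives \(|\langle\Delta u,v\rangle| \lesssim \|u\|_{H^2(\Omega)}\|v\|_{H^1(\Omega)}\), with an implicit constant equal to \(1 + C_{\mathrm{tr}}^2\) up to a dimensional factor.

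The only delicate point is the justification of Green's formula and of the boundary trace for \(\nabla u\) on a Lipschitz domain. I would handle this by density of \(C^\infty(\overline\Omega)\) in \(H^2(\Omega)\) and in \(H^1(\Omega)\), passing to the limit using the continuity of both sides. The hypothesis \(u \in V\) is not actually needed for the estimate; it is only there to place the result in context.

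The word ``stiff'' refers to the boundary contribution. This is the term that cannot be controlled by \(\|u\|_{H^1(\Omega)}\) alone and forces the use of the \(H^2\)-norm via the trace constant. I would point this out after the proof rather than prove any sharpness.
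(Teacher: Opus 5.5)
Your proposal is correct and takes essentially the same route as the paper. Green's formula splits \(\langle \Delta u, v\rangle_{L^2(\Omega)}\) into an interior gradient term, bounded by Cauchy--Schwarz, and a boundary flux term, bounded by Cauchy--Schwarz on \(\partial\Omega\) together with the trace theorem, which gives \(|\Delta u|_{-1,V} \lesssim |u|_{H^1(\Omega)} + \|u\|_{H^2(\Omega)}\). Your two remarks are accurate: the density argument justifies Green's formula, and the estimate does not use \(u \in V\).
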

\begin{proof}
    Let \(u \in V \cap H^2(\Omega)\). Then, in the distributional sense, as in Definition~\ref{def:bvp}, for any \(v \in H^1(\Omega)\)
    \begin{equation}
        \int_\Omega (\Delta u) v d^m \bm x = B(u, v) - \int_\Omega \nabla u \cdot \nabla v d^m \bm x, \qquad B(u, v) \coloneqq \int_{\partial \Omega} (\nabla u \cdot \eta) v d^{m-1} \bm x.
    \end{equation}
    Thus, we estimate
    \begin{equation}
        |\Delta u|_{-1, V} 
        \le 
        |u|_{H^1(\Omega)} 
        + 
        \sup_{v \in V \setminus \{0\}} \frac{
            B(u, v)
        }{
            \|v\|_{H^1(\Omega)}
        }.
    \end{equation}
    Then, the sharpest upper bound of the boundary term is
    \begin{equation}
        |B(u, v)| 
        \le
        \|\nabla u \cdot \eta\|_{L^2(\partial\Omega)} \|v\|_{L^2(\partial\Omega)} 
        \lesssim 
        \|u\|_{H^2(\Omega)} \|v\|_{H^1(\Omega)}.
    \end{equation}
    Consequently, we obtain 
    \begin{equation}
        |\Delta u|_{-1, V} 
        \lesssim 
        |u|_{H^1(\Omega)} 
        + 
        \|u\|_{H^2(\Omega)} 
    \end{equation}
\end{proof}
As will be presented later in Section~\ref{sec:numerical_experiments}, numerical experiments verify that \(|\cdot|_{-1,V}\) fails to render the Laplace operator \(\Delta u\) uniformly continuous on \(V\) endowed with the \(H^1\)-norm. Consequently, the residual loss functional from Definition~\ref{def:residual_loss_functional} turns out to be stiff. This motivates us to introduce the following norm that, in theory, mitigates this problem.

\subsection{Dual subspace-induced negative-order Sobolev seminorm}
\label{sec:dual_subspace_induced_negative_order_sobolev_seminorm}
We prove a uniform upper bound of the Laplace operator in Proposition~\ref{prop:dual_continuity} preventing boundary terms from blowing up. Proposition~\ref{prop:dual_coercivity} reflects that certain contributions are still deliberately down-weighted.

Let \(m \in \mathbb N\) be the spatial dimension and let \(\Omega \subset \mathbb R^m\) be a bounded domain. We introduce the following dual subspace-induced seminorm on \(H^{-1}(\Omega)\).
\begin{definition}
    \label{def:dual_seminorm}
    Let \(V \subset H^1(\Omega)\) be a finite-dimensional linear subspace which is closed under partial differentiation. For each coordinate, denote by 
    \(
        \partial_i : 
        \left(V, {\|\cdot\|_{L^2(\Omega)}}\right) 
        \to 
        \left(V, {\|\cdot\|_{L^2(\Omega)}}\right)
    \)
    the \(i\)-th partial derivative operator and by 
    \(
        \partial_i^* : 
        \left(V, {\|\cdot\|_{L^2(\Omega)}}\right) 
        \to 
        \left(V, {\|\cdot\|_{L^2(\Omega)}}\right)
    \)
    its adjoint. For \(u \in H^{-1}(\Omega)\), we define the first negative-order Sobolev seminorm associated with \(V^*\) by
    \begin{equation}
        |u|_{-1, V^*} \coloneqq \sup_{v \in V: \|v\|_{1, V^*} = 1} \langle u, v \rangle_{L^2(\Omega)},
    \end{equation}
    where \(\|\cdot\|_{1, V^*}\) is a modified \(H^1\)-norm on the subspace \(V \simeq V^*\) given, for \(v \in V\), by
    \begin{equation}
        \|v\|_{1, V^*} \coloneqq \|(v, \partial_1^*v, \ldots, \partial_m^*v)\|_{L^2(\Omega)},
    \end{equation}
    induced, for \(v, w \in V\), by the inner product
    \begin{equation}
        \langle v, w\rangle_{1, V^*} \coloneqq \langle v, w\rangle_{L^2(\Omega)} + \sum_{i=1}^m\langle\partial_i^*v,\partial_i^*w\rangle_{L^2(\Omega)}.
    \end{equation}
\end{definition}
We begin by analyzing the behavior of the Laplace operator under the seminorm \(|\cdot|_{-1, V^*}\). The following proposition establishes the \emph{essential continuity properties} by which stiffness regarding the residual loss functional~\eqref{eq:residual_loss_functional} can be controlled from above.
\begin{proposition}[Continuity of the Laplace operator]
    \label{prop:dual_continuity}
    Let \(V \subset H^1(\Omega)\) be a finite-dimensional linear subspace which is closed under partial differentiation. For \(u \in V\), the seminorm \(|\cdot|_{-1, V^*}\) satisfies
    
    \begin{enumerate}[label=(\roman*)]
        \item \(
            |\partial_i u|_{-1, V^*} \leq \| u\|_{L^2(\Omega)},
        \)
        \item \(
            |\Delta u|_{-1, V^*} \leq |u|_{H^1(\Omega)}.
        \)
    \end{enumerate}
\end{proposition}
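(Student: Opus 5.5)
The plan is to use the defining property of the adjoint \(\partial_i^*\) on \((V,\|\cdot\|_{L^2(\Omega)})\), namely \(\langle \partial_i u, v\rangle_{L^2(\Omega)} = \langle u, \partial_i^* v\rangle_{L^2(\Omega)}\) for all \(u,v \in V\). This exchanges the derivative on the residual for the adjoint derivative on the test function, and the norm \(\|\cdot\|_{1,V^*}\) is built exactly to control the latter. Since \(V\) is closed under partial differentiation, \(\partial_i u \in V\) and \(\Delta u = \sum_i \partial_i\partial_i u \in V\) for \(u \in V\). All pairings therefore stay inside \(V\), where the adjoint identity is available. No boundary term appears, because the adjoint is defined abstractly on the finite-dimensional space rather than through Green's formula. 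This is precisely what distinguishes the situation from Proposition~\ref{prop:continuity}.

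For (i), fix \(u \in V\) and \(v \in V\) with \(\|v\|_{1,V^*} = 1\). The adjoint identity and Cauchy--Schwarz give
\begin{equation}
    \langle \partial_i u, v\rangle_{L^2(\Omega)} = \langle u, \partial_i^* v\rangle_{L^2(\Omega)} \le \|u\|_{L^2(\Omega)} \|\partial_i^* v\|_{L^2(\Omega)}.
\end{equation}
By definition, \(\|\partial_i^* v\|_{L^2(\Omega)}^2 \le \|v\|_{L^2(\Omega)}^2 + \sum_{j}\|\partial_j^* v\|_{L^2(\Omega)}^2 = \|v\|_{1,V^*}^2 = 1\). Taking the supremum over such \(v\) yields (i).

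For (ii), apply the adjoint identity once per coordinate:
\begin{equation}
    \langle \Delta u, v\rangle_{L^2(\Omega)} = \sum_{i=1}^m \langle \partial_i(\partial_i u), v\rangle_{L^2(\Omega)} = \sum_{i=1}^m \langle \partial_i u, \partial_i^* v\rangle_{L^2(\Omega)}.
\end{equation}
Here \(\partial_i u \in V\) is used so that the identity applies. We then use Cauchy--Schwarz in \(L^2(\Omega)^m\) rather than termwise. This step is the one to get right, since a termwise bound would lose a factor of \(\sqrt m\). It gives
\begin{equation}
    \sum_{i=1}^m \langle \partial_i u, \partial_i^* v\rangle_{L^2(\Omega)} \le \Big(\sum_i \|\partial_i u\|_{L^2(\Omega)}^2\Big)^{1/2}\Big(\sum_i \|\partial_i^* v\|_{L^2(\Omega)}^2\Big)^{1/2} \le |u|_{H^1(\Omega)} \|v\|_{1,V^*}.
\end{equation}
The last step uses that dropping the \(\|v\|_{L^2(\Omega)}^2\) term only decreases the second factor. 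Taking the supremum over \(\|v\|_{1,V^*}=1\) gives \(|\Delta u|_{-1,V^*} \le |u|_{H^1(\Omega)}\).

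The only genuine subtlety is well-definedness. The adjoint \(\partial_i^*\) must exist as a map \(V \to V\), which holds because \(V\) is finite-dimensional with the \(L^2\) inner product. The pairing \(\langle u, v\rangle_{L^2(\Omega)}\) in Definition~\ref{def:dual_seminorm} must also be read as the \(L^2\) inner product for the arguments \(\partial_i u, \Delta u \in V \subset L^2(\Omega)\). Once this is noted, both estimates reduce to the computations above, and no trace or Green's formula estimate is needed.
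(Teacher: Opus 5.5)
Your proof is correct and is essentially the paper's argument: you move the derivative onto the test function via the adjoint identity on $(V,\|\cdot\|_{L^2(\Omega)})$, then apply Cauchy--Schwarz and use $\sum_i \|\partial_i^* v\|_{L^2(\Omega)}^2 \le \|v\|_{1,V^*}^2$. One small correction to your commentary: the paper applies Cauchy--Schwarz termwise in $L^2(\Omega)$ and loses nothing, because it then bounds $\sum_i \|\partial_i u\|_{L^2(\Omega)}\|\partial_i^* v\|_{L^2(\Omega)}$ by $|u|_{H^1(\Omega)}\|v\|_{1,V^*}$ using Cauchy--Schwarz in $\mathbb R^m$; a $\sqrt{m}$ factor would appear only if each $\|\partial_i^* v\|_{L^2(\Omega)}$ were bounded by $1$ separately.
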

\begin{proof}
    Let \(u \in V\). We begin to demonstrate assertion \emph{(i)}
    \begin{equation}
        |\partial_i u|_{-1, V^*} 
        = 
        \sup_{v \in V: \|v\|_{1, V^*} = 1} \langle u, \partial_i^* v  \rangle_{L^2(\Omega)} \\
        \le
        \sup_{v \in V: \|v\|_{1, V^*} = 1} \|u\|_{L^2(\Omega)}  \|\partial_i^*v\|_{L^2(\Omega)} \\
        \le
        \|u\|_{L^2(\Omega)}.
    \end{equation}
    Assertion \emph{(ii)} follows by an analogous computation 
    \begin{equation}
        |\Delta u|_{-1, V^*} = \sup_{v \in V : \|v\|_{1, V^*} = 1} \langle \Delta u, v\rangle_{L^2(\Omega)} = \sup_{v \in V: \|v\|_{1, V^*}=1} \sum_{i=1}^m \langle \partial_i u, \partial_i^* v\rangle_{L^2(\Omega)},
    \end{equation}
    and, therefore
    \begin{equation}
        |\Delta u|_{-1, V^*} \leq \sup_{v \in V : \|v\|_{1, V^*}=1}\sum_{i=1}^m \| \partial_i u\|_{L^2(\Omega)} \|\partial_i^* v\|_{L^2(\Omega)} \leq \sup_{v \in V : \|v\|_{1,V^*} = 1} |u|_{H^1(\Omega)} \|v\|_{1, V^*} = |u|_{H^1(\Omega)}.
    \end{equation}
\end{proof}
Neuberger introduced and systematically developed the concept of Laplacians \cite{Parima2008, Neuberger97}, building on earlier ideas of Beurling \cite{Beurling59}. In this context, Laplacians are defined as the adjoints of embeddings of the form \(H^1 \hookrightarrow L^2\), which specifies in our case as follows.
\begin{proposition}[Laplacian]
    \label{prop:laplacian}
    Let \(V \subset H^1(\Omega)\) be a finite-dimensional linear subspace that is closed under partial differentiation. Then, the Laplacian of \(|\cdot|_{1, V^*}\) is given by
    \begin{equation}
        M = \left( \operatorname{id}_V + \sum_{i=1}^m \partial_i \partial_i^* \right)^{-1},
    \end{equation}
    yielding
    \(
        |u|_{-1, V^*} = \|M u\|_{1, V^*}
    \)
    for all \(u \in V\).
\end{proposition}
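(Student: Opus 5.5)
The plan is to identify $M$ as the Riesz representation map of the Hilbert space $(V,\langle\cdot,\cdot\rangle_{1,V^*})$ with respect to the $L^2$ pairing, and then to read off the dual-norm identity from Riesz' theorem.

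Since $V$ is finite-dimensional and closed under partial differentiation, each $\partial_i$ is a linear map $V\to V$. Its $L^2(\Omega)$-adjoint $\partial_i^*:V\to V$ exists and satisfies $\langle \partial_i v,w\rangle_{L^2}=\langle v,\partial_i^* w\rangle_{L^2}$ for all $v,w\in V$. Define $S\coloneqq \operatorname{id}_V+\sum_{i=1}^m\partial_i\partial_i^*$. For $v,w\in V$,
\begin{equation}
    \langle Sv,w\rangle_{L^2(\Omega)}=\langle v,w\rangle_{L^2(\Omega)}+\sum_{i=1}^m\langle \partial_i^* v,\partial_i^* w\rangle_{L^2(\Omega)}=\langle v,w\rangle_{1,V^*}.
\end{equation}
Hence $S$ is $L^2$-self-adjoint and positive definite, since $\langle Sv,v\rangle_{L^2}\ge\|v\|_{L^2}^2>0$ for $v\neq0$. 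It is therefore injective on the finite-dimensional space $V$, hence bijective, and $M=S^{-1}$ is well defined. This also confirms that $\langle\cdot,\cdot\rangle_{1,V^*}$ is a genuine inner product, so $\|\cdot\|_{1,V^*}$ is a norm on $V$. In Neuberger's sense, $M$ is precisely the adjoint of the embedding $(V,\|\cdot\|_{1,V^*})\hookrightarrow(V,\|\cdot\|_{L^2})$, because $\langle u,v\rangle_{L^2}=\langle SMu,v\rangle_{L^2}=\langle Mu,v\rangle_{1,V^*}$.

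Now fix $u\in V$. By the identity $\langle u,v\rangle_{L^2}=\langle Mu,v\rangle_{1,V^*}$ for every $v\in V$, the definition of $|\cdot|_{-1,V^*}$ gives
\begin{equation}
    |u|_{-1,V^*}=\sup_{v\in V:\|v\|_{1,V^*}=1}\langle Mu,v\rangle_{1,V^*}\le\|Mu\|_{1,V^*},
\end{equation}
where the inequality is Cauchy--Schwarz in the $1,V^*$ inner product. If $Mu\neq0$, choosing $v=Mu/\|Mu\|_{1,V^*}$ attains this bound. If $Mu=0$, then $u=0$ and both sides vanish. This proves $|u|_{-1,V^*}=\|Mu\|_{1,V^*}$.

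No step is a real obstacle. The only point needing care is the invertibility of $\operatorname{id}_V+\sum_i\partial_i\partial_i^*$, and that relies on finite-dimensionality together with the coercive term $\operatorname{id}_V$. The adjoints must also be taken with respect to $L^2$ restricted to $V$, as in Definition~\ref{def:dual_seminorm}, rather than with respect to all of $L^2(\Omega)$. Finally, since $u\in V$, the $L^2$ pairing with $u$ is already a functional on $V$, so no projection is needed.
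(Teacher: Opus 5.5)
Your proof is correct. The identity $\langle Sv,w\rangle_{L^2(\Omega)}=\langle v,w\rangle_{1,V^*}$ gives both the invertibility of $S$ and the relation $\langle u,v\rangle_{L^2(\Omega)}=\langle Mu,v\rangle_{1,V^*}$, and Cauchy--Schwarz, with equality attained at $v=Mu/\|Mu\|_{1,V^*}$, then yields $|u|_{-1,V^*}=\|Mu\|_{1,V^*}$.

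The paper gives no proof of its own. It only cites \cite[Proposition~2]{Suarez2024} and frames $M$ as Neuberger's Laplacian, i.e.\ the adjoint of the embedding $(V,\|\cdot\|_{1,V^*})\hookrightarrow(V,\|\cdot\|_{L^2(\Omega)})$, which is exactly what you verify. Your argument is therefore essentially the intended one, and it agrees with the later use of $v_*=Mu$ in Appendix~\ref{app:infimum_representation}.
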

\begin{proof}
    % We defer the proof to Appendix~\ref{app:laplacian}.
    A proof is given in \cite[Proposition~2]{Suarez2024}
\end{proof}
We present a finite-dimensional analogue of \cite[Theorem~3.9, p.~62]{Adams2003} that yields a coercivity estimate for the seminorm \(|\cdot|_{-1,V^*}\) in Proposition~\ref{prop:dual_coercivity}.
\begin{theorem}[Infimum representation]
    \label{theo:infimum_representation}
    Let \(V \subset H^1(\Omega)\) be a finite-dimensional linear subspace that is closed under partial differentiation. Then, for \(u \in V\), the dual subspace-induced first negative-order Sobolev seminorm admits the following representation
    \begin{equation}
        \label{eq:3}
        |u|_{-1, V^*} = \inf\left\{ \|(v_0, v_1, \ldots, v_m)\|_{L^2(\Omega)} \ : \ u = v_0 + \sum_{i=1}^m \partial_i v_i, \  v_i \in V\right\}.
    \end{equation}
\end{theorem}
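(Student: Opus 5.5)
The plan is to show two inequalities, ``$\le$'' and ``$\ge$'', working entirely inside the finite-dimensional Hilbert space $(V,\langle\cdot,\cdot\rangle_{L^2(\Omega)})$. There all adjoints $\partial_i^*$ exist and all infima and suprema are attained.

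\emph{Step 1 (upper bound for the seminorm).} Fix $u\in V$ and any decomposition $u=v_0+\sum_{i=1}^m\partial_i v_i$ with $v_i\in V$. For $w\in V$ with $\|w\|_{1,V^*}=1$, move the derivatives onto $w$ using the adjoint:
\begin{equation*}
\langle u,w\rangle_{L^2(\Omega)}=\langle v_0,w\rangle_{L^2(\Omega)}+\sum_{i=1}^m\langle v_i,\partial_i^*w\rangle_{L^2(\Omega)}\le \|(v_0,\dots,v_m)\|_{L^2(\Omega)}\,\|(w,\partial_1^*w,\dots,\partial_m^*w)\|_{L^2(\Omega)}.
\end{equation*}
This is the Cauchy--Schwarz inequality in $L^2(\Omega)^{m+1}$, and the right-hand side equals $\|(v_0,\dots,v_m)\|_{L^2(\Omega)}$. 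Taking the supremum over $w$ and then the infimum over decompositions gives $|u|_{-1,V^*}\le\inf\{\dots\}$.

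\emph{Step 2 (a decomposition attaining equality).} We use the Laplacian $M=(\operatorname{id}_V+\sum_i\partial_i\partial_i^*)^{-1}$ from Proposition~\ref{prop:laplacian}. It is well defined because $\operatorname{id}_V+\sum_i\partial_i\partial_i^*$ is symmetric and positive definite on $V$. Set $w:=Mu\in V$, and define $v_0:=w$ and $v_i:=\partial_i^*w\in V$. Then
\begin{equation*}
v_0+\sum_{i=1}^m\partial_i v_i=\Bigl(\operatorname{id}_V+\sum_{i=1}^m\partial_i\partial_i^*\Bigr)Mu=u,
\end{equation*}
so $(v_0,\dots,v_m)$ is admissible. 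Its norm is $\|(w,\partial_1^*w,\dots,\partial_m^*w)\|_{L^2(\Omega)}=\|Mu\|_{1,V^*}$, and by Proposition~\ref{prop:laplacian} this equals $|u|_{-1,V^*}$. Hence $\inf\{\dots\}\le|u|_{-1,V^*}$, and the infimum is in fact a minimum.

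\emph{Main obstacle.} The only delicate point is Step 2's reliance on the identity $|u|_{-1,V^*}=\|Mu\|_{1,V^*}$. If we want the argument to be self-contained rather than citing \cite{Suarez2024}, we verify it directly. For $w'\in V$ we have $\langle u,w'\rangle_{L^2}=\langle(\operatorname{id}+\sum\partial_i\partial_i^*)Mu,w'\rangle_{L^2}=\langle Mu,w'\rangle_{1,V^*}$. So $u$ is represented in the $\langle\cdot,\cdot\rangle_{1,V^*}$ inner product by $Mu$, and the Riesz representation theorem on the finite-dimensional space gives $\sup_{\|w'\|_{1,V^*}=1}\langle u,w'\rangle_{L^2}=\|Mu\|_{1,V^*}$.

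Closedness of $V$ under $\partial_i$ is what makes $\partial_i^*$ an operator $V\to V$. This is needed so that the chosen $v_i=\partial_i^*w$ lie in $V$, and so that $\langle\partial_i v_i,w\rangle_{L^2}=\langle v_i,\partial_i^*w\rangle_{L^2}$ is legitimate in Step 1.
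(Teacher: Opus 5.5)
Your proof is correct and follows the paper's argument essentially step for step. The upper bound uses Cauchy--Schwarz after moving the derivatives onto the test function via $\partial_i^*$, and the reverse bound uses the extremal decomposition $v_0 = Mu$, $v_i = \partial_i^* Mu$ together with $|u|_{-1,V^*} = \|Mu\|_{1,V^*}$. Your direct check of that identity via $\langle u, w'\rangle_{L^2(\Omega)} = \langle Mu, w'\rangle_{1,V^*}$, and of the invertibility of $\mathrm{id}_V + \sum_i \partial_i\partial_i^*$, is a useful self-contained supplement to the paper, which only cites Proposition~\ref{prop:laplacian} for both.
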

\begin{proof}
    A proof can be found in Appendix~\ref{app:infimum_representation}.
\end{proof}
Finally, we present the coercivity result for \(|\cdot|_{-1,V^*}\).
\begin{proposition}[Coercivity]
    \label{prop:dual_coercivity}
    Let \(V \subset H^1(\Omega)\) be a finite-dimensional linear subspace that is closed under partial differentiation, and let
    \(
        \operatorname{tr}: (V, \|\cdot\|_{H^1(\Omega)}) \to L^2(\partial\Omega)
    \)
    denote the trace operator. Then, for \(u \in V\), the seminorm \(|\cdot|_{-1,V^*}\) satisfies
    \begin{equation}
        \|u\|_{H^{-1}(\Omega)} 
        \leq  
        2 \sqrt{m+1} \max \left\{1, C \left(\sup_{v \in V \setminus \{0\}} \frac{\|v\|_{H^1(\Omega)}}{\|v\|_{L^2(\Omega)}}\right) \right\} |u|_{-1, V^*},
    \end{equation}
    with a constant \(C = C(\Omega)> 0\) only depending on the domain.
\end{proposition}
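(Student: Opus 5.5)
The plan is to start from the infimum representation of Theorem~\ref{theo:infimum_representation}. Since \(V\) is finite-dimensional, the infimum is attained. So for \(u \in V\) we fix \(v_0, v_1, \ldots, v_m \in V\) with
\begin{equation*}
    u = v_0 + \sum_{i=1}^m \partial_i v_i,
    \qquad
    \|(v_0, \ldots, v_m)\|_{L^2(\Omega)} = |u|_{-1, V^*}.
\end{equation*}
Then, for any test function \(w \in H^1(\Omega)\) with \(\|w\|_{H^1(\Omega)} = 1\), we split
\(
    \langle u, w\rangle_{L^2(\Omega)} = \langle v_0, w\rangle_{L^2(\Omega)} + \sum_{i} \langle \partial_i v_i, w\rangle_{L^2(\Omega)}
\).
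The first term is bounded by \(\|v_0\|_{L^2(\Omega)}\). Each remaining term is handled by integration by parts, as in Corollary~\ref{cor:3} and Green's formula \eqref{eq:8}:
\begin{equation*}
    \langle \partial_i v_i, w\rangle_{L^2(\Omega)} = -\langle v_i, \partial_i w\rangle_{L^2(\Omega)} + \int_{\partial\Omega} v_i \eta_i w \, d^{m-1}\bm x .
\end{equation*}
The volume part is bounded by \(\|v_i\|_{L^2(\Omega)}\). The boundary part is bounded by \(\|\operatorname{tr}(v_i)\|_{L^2(\partial\Omega)} \|\operatorname{tr}(w)\|_{L^2(\partial\Omega)}\).

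The key step is controlling these boundary contributions by the \(L^2\)-norms of the \(v_i\). For \(w\), the trace theorem (Theorem~\ref{theo:trace_operator}, composed with \(H^{1/2}(\partial\Omega) \hookrightarrow L^2(\partial\Omega)\)) gives \(\|\operatorname{tr}(w)\|_{L^2(\partial\Omega)} \le C_{\mathrm{tr}} \|w\|_{H^1(\Omega)} = C_{\mathrm{tr}}\). For \(v_i \in V\), the same trace bound gives \(\|\operatorname{tr}(v_i)\|_{L^2(\partial\Omega)} \le C_{\mathrm{tr}} \|v_i\|_{H^1(\Omega)}\). This in turn is at most \(C_{\mathrm{tr}} R \|v_i\|_{L^2(\Omega)}\), where
\begin{equation*}
    R = \sup_{v \in V\setminus\{0\}} \frac{\|v\|_{H^1(\Omega)}}{\|v\|_{L^2(\Omega)}} .
\end{equation*}
This inverse estimate is exactly where the Rayleigh quotient in the statement enters. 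It is also where the finiteness of \(\dim V\) is used: it makes \(R < \infty\). We therefore set \(C = C_{\mathrm{tr}}^2\), which depends only on \(\Omega\).

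Collecting terms gives
\begin{equation*}
    \langle u, w\rangle_{L^2(\Omega)} \le \|v_0\|_{L^2(\Omega)} + \sum_{i=1}^m (1 + C R)\|v_i\|_{L^2(\Omega)} \le 2\max\{1, CR\} \sum_{i=0}^m \|v_i\|_{L^2(\Omega)} .
\end{equation*}
By the Cauchy--Schwarz inequality in \(\mathbb R^{m+1}\), \(\sum_{i=0}^m \|v_i\|_{L^2(\Omega)} \le \sqrt{m+1}\, \|(v_0,\ldots,v_m)\|_{L^2(\Omega)} = \sqrt{m+1}\, |u|_{-1,V^*}\). Taking the supremum over \(w\) then yields the claim with the stated constant \(2\sqrt{m+1}\max\{1, CR\}\), where the factor \(2\) comes from \(1 + CR \le 2\max\{1, CR\}\).

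The main obstacle is the boundary term. Its treatment genuinely requires the trace of \(v_i\) in \(L^2(\partial\Omega)\), which is not controlled by \(\|v_i\|_{L^2(\Omega)}\) in general; on \(V\) it is controlled only through the Rayleigh quotient. I will need to check that the integration by parts is justified for \(v_i \in V \subset H^1(\Omega)\) and \(w \in H^1(\Omega)\) on a Lipschitz domain. This is standard: the product \(v_i w\) lies in \(W^{1,1}(\Omega)\), so the divergence theorem applies, or alternatively one argues by density as in Corollary~\ref{cor:3}.
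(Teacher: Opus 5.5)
Your proof is correct and follows essentially the same route as the paper. The paper also decomposes \(u\) via Theorem~\ref{theo:infimum_representation}, integrates each \(\partial_i v_i\) by parts against the test function (through Corollary~\ref{cor:3}), controls the traces of the \(v_i\) by the trace theorem and the Rayleigh quotient, and gets the factor \(\sqrt{m+1}\) from Cauchy--Schwarz. It differs only in working with squared norms and taking the infimum over decompositions at the end rather than fixing a minimizer. Your explicit trace constant on the test function \(w\), which gives \(C = C_{\mathrm{tr}}^2\), is slightly more careful than the paper. Its appeal to Corollary~\ref{cor:3} silently uses \(\|w\|_{L^2(\partial\Omega)} \le \|w\|_{H^1(\Omega)}\), i.e.\ a trace constant of one.
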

\begin{proof}
    Let \(u \in V\). Further, let a decomposition \(v_0, v_1, \ldots, v_m \in V\) of \(u\) in the sense of \eqref{eq:3}. Then, we can estimate the following
    \begin{align}
        \frac{1}{m+1}\|u\|_{H^{-1}(\Omega)}^2 
        & \le 
        \|v_0\|_{H^{-1}(\Omega)}^2 + \sum_{i=1}^m \|\partial_i v_i\|_{H^{-1}(\Omega)}^2 \\
        & \le 
        \|v_0\|_{L^2(\Omega)}^2 + \sum_{i=1}^m \left(\|v_i\|_{L^2(\Omega)} + \|v_i\|_{L^2(\partial\Omega)}\right)^2 \\
        & \leq 
        \|v_0\|_{L^2(\Omega)}^2 + 2 \sum_{i=1}^m \left(\|v_i\|_{L^2(\Omega)}^2 + \|\operatorname{tr}\|^2 \cdot \|v_i\|_{H^1(\Omega)}^2 \right) \\
        & \leq 
        4 \max \left\{1, C^2 \left(\sup_{v \in V \setminus \{0\}} \frac{\|v\|_{H^1(\Omega)}}{\|v\|_{L^2(\Omega)}}\right)^2\right\} \|(v_0, v_1, \ldots, v_m)\|_{L^2(\Omega)}^2.
    \end{align}
    Here, the constant depends on the norm of the trace operator which itself depends only on the domain \(\Omega\) and is therefore absorbed into a generic constant. Notably, the left-hand side is independent of the particular choice of the decomposition \(v_0, v_1, \ldots, v_m\). Taking the infimum over all such decompositions on the right-hand and applying Theorem \ref{theo:infimum_representation}, we obtain
    \begin{equation}
        \|u\|_{H^{-1}(\Omega)}^2 
        \leq 
        4 (m+1)\max \left\{1, C^2 \left(\sup_{v \in V \setminus \{0\}} \frac{\|v\|_{H^1(\Omega)}}{\|v\|_{L^2(\Omega)}}\right)^2\right\} |u|_{-1, V^*}^2.
    \end{equation}
    Finally, taking the square root of both sides completes the proof.
\end{proof}
We proceed by discretizing with specific choices for the finite-dimensional linear subspace \(V\).

\section{Discretization of the residual loss functional}
\label{sec:discretization_of_the_residual_loss_functional}
In this section, we specify the choice of the finite-dimensional subspace \(V\) to be a polynomial space, well-suited for the non-periodic setting of PINNs \cite{Hecht2026, Cohen2017}.
We then investigate the resulting discrete Sobolev norms and compute the Rayleigh quotients required to apply the continuity and coercivity estimates. This yields conditioning improvements of one order of magnitude for SINOS and two orders of magnitude for D-SINOS compared to the \(L^2\)-based formulation.

\subsection{Polynomial space}
\label{sec:polynomial_space}
The construction of D-SINOS is based on finite-dimensional subspaces \(V \subset H^1(\Omega)\) that are closed under partial differentiation. \(V\) is uniquely determined by a finite basis 
\begin{equation}
    v_1, \ldots, v_N \in H^1(\Omega), \quad V = \operatorname{span}_{\mathbb R}\{v_1,\dots,v_N\}
\end{equation}
such that for each coordinate direction \(j\) and each basis function \(v_i\), \(\partial_j v_i \in V\).

A canonical choice is given by multivariate polynomial spaces. Let \(m \in \mathbb N\), Let \(n_1, \ldots, n_m \in \mathbb N_0\), and \(\Omega = [-1, 1]^m\). We define the maximal degree polynomial space
\begin{equation}
    \Pi_\square 
    \coloneqq 
    \operatorname{span}_{\mathbb R}\{ \bm x^{\bm k} = x_1^{k_1} \cdots x_m^{k_m} \ : \ \bm k \in \square\},
\end{equation}
where
\begin{equation}
    \square \coloneqq \{0,\ldots,n_1\}\times\cdots\times\{0,\ldots,n_m\}.
\end{equation}
The space \(\Pi_\square\) is finite-dimensional and closed under partial differentiation.

\subsection{Sobolev cubature rules}
\label{sec:sobolev_cubature_rules}
Next, we specify a reference grid \(\bm P_{\Omega} \subset \Omega\) and Sobolev cubature rules for computing SINOS and D-SINOS. We construct a tensorial grid from a set of one-dimensional Gauss-Legendre nodes and derive the corresponding Sobolev cubature rules, cf. \cite{Suarez2024}. The nodes and quadrature weights of the one-dimensional Gauss–Legendre rule are described in the standard literature, see e.g. \cite[pp.~436-438]{Quarteroni2007}.

\paragraph{The $L^2(\Omega)$ norm:} Let \(m \in \mathbb N\) and \(\Omega = [-1, 1]^m\). Let
\(
    P_i = \{p_{0,i}, p_{1,i}, \dots, p_{n_i,i}\}\subset [-1,1]
\)
denote the set of one-dimensional Gauss-Legendre nodes for each dimension, \(n_i \in \mathbb N\). We define the \emph{tensorial grid}
\begin{equation}
    P_\Omega \coloneqq \{\bm p_{\bm k} \coloneqq (p_{k_1,1}, p_{k_2,2}, \ldots, p_{k_m,m}) \ : \ \bm k \in \square\}.
\end{equation}
Given \(n_i \coloneqq \max_{\bm k \in \square} k_i\), let \(w_{0,i}, w_{1,i}, \ldots, w_{n_i,i}\) denote the Gauss-Legendre weights of \(P_i\). For an index \(\bm k \in \square\), the Gauss-Legendre quadrature weight is then given by
\begin{equation}
    w_{\bm k}
    \coloneqq
    w_{k_1} \dots w_{k_m},
\end{equation}
defining the diagonal weight matrix
\(
    \bm W = \text{diag}(w_{\bm k})_{\bm k \in \square}.
\)
For \(u \in H^1(\Omega)\), we denote by \(\bm u\) the vector of the values of \(u\) in the points \(P_\Omega\) in lexicographic order. Then, the \(L^2\) norm can be approximated by
\begin{equation}
    \|u\|^2_{L^2(\Omega)} 
    \approx 
    \sum_{\bm k \in \square} w_{\bm k} u(\bm p_{\bm k})^2
    = 
    \bm u^\top \bm W \bm u
\end{equation}

\paragraph{The $H^1(\Omega)$ norm:} Let \(\bm D^{(i)}\in \mathbb R^{|\square| \times |\square|}\) denote the differentiation matrix corresponding to the \(i\)-th partial derivative of a multivariate interpolating polynomial in \(P_\Omega\). For \(\bm k \in \square\), it is defined via
\begin{equation}
    (\bm D^{(i)} \bm u)_{\bm k} \coloneqq \left(\partial_i Q_{\bm u}\right)(\bm p_{\bm k}),
\end{equation}
where \(Q_{\bm u}\) is the interpolating polynomial satisfying \(Q_{\bm u}(\bm p_{\bm l}) = u_{\bm l}\) for all \(\bm l \in \square\). Define the \(\bm W\)-self-adjoint matrices
\begin{equation}
    \bm J = \bm I + \sum_{i=1}^m \bm W^{-1} (\bm D^{(i)})^\top \bm W  \bm D^{(i)}, \qquad \bm K = \bm I + \sum_{i=1}^m \bm D^{(i)} \bm W^{-1} (\bm D^{(i)})^\top \bm W,
\end{equation}
with \(\bm I\) being the identity matrix. These matrices represent Sobolev cubature rules which, for \(u \in V\), yield the norms \(\|u\|_{1, V}\) and \(\|u\|_{1, V^*}\), respectively
\begin{equation}
    \label{eq:discrete_quadratic_forms}
    \|u\|_{1,V}^2 
    = 
    \bm u^\top \bm W \bm J \bm u
    \qquad 
    \|u\|_{1, V^*}^2
    = 
    \bm u^\top \bm W \bm K \bm u.
\end{equation}

\paragraph{SINOS and D-SINOS:} A direct consequence of Proposition~\ref{prop:laplacian}, is that SINOS and D-SINOS can be computed by inverting \(\bm J\) and \(\bm K\), respectively, for \(u \in V\) as follows
\begin{equation}
    \label{eq:discrete_quadratic_forms_negative_norm}
    \|u\|_{-1,V}^2 
    = 
    \bm u^\top \bm W \bm J^{-1} \bm u, 
    \qquad 
    \|u\|_{-1, V^*}^2
    = 
    \bm u^\top \bm W \bm K^{-1} \bm u.
\end{equation}

\paragraph{The $H^{1/2}(\partial\Omega)$ norm:}
On each boundary face
\(
    \partial\Omega_{j,\sigma}
    \coloneqq
    \{\bm x\in \partial\Omega : x_j=\sigma\},
\)
\(
    j=1,\ldots,m,
\)
\(
    \sigma \in \{-1, 1\}
\)
we fix the \(j\)-th coordinate to \(\sigma\) and use the Gauss-Legendre tensorial grid the remaining coordinates.
This yields the boundary grid
\begin{equation}
    P_{\partial\Omega}
    =
    \{\bm x_1,\ldots,\bm x_{N_{\partial\Omega}}\},
\end{equation}
with corresponding cubature weights \(\omega_1,\ldots,\omega_{N_{\partial\Omega}}\). We set
\begin{equation}
    \bm W_{\partial\Omega}
    \coloneqq
    \operatorname{diag}
    (\omega_1,\ldots,\omega_{N_{\partial\Omega}}).
\end{equation}
For \(\varphi \in H^{1/2}(\partial\Omega)\), we denote by \(\bm \varphi\) the vector of values of \(\varphi\) in the points \(P_{\partial\Omega}\) in lexicographic order.

We approximate the \(H^{1/2}(\partial\Omega)\) norm~\cite{Gagliardo1958, Adams2003} by
\begin{equation}
    \|\varphi\|_{H^{1/2}(\partial\Omega)}^2
    \approx
    \bm\varphi^\top
    \bm W_{\partial\Omega}
    \bm\varphi
    +
    \sum_{\substack{i,j=1, \ i \neq j}}^{N_{\partial\Omega}}
    \omega_i \omega_j
    \frac{
        \left(\varphi(\bm x_i)-\varphi(\bm x_j)\right)^2
    }{
        |\bm x_i-\bm x_j|^{m}
    }.
\end{equation}
Equivalently, this can be written as
\begin{equation}
    \|\varphi\|_{H^{1/2}(\partial\Omega)}^2
    \approx
    \bm\varphi^\top \, 
    \bm W_{1/2,\partial\Omega}
    \, \bm\varphi,
    \qquad
    \bm W_{1/2,\partial\Omega}
    \coloneqq
    \bm W_{\partial\Omega}
    +
    2 \bm L_{\partial\Omega}.
\end{equation}
Here, \(\bm L_{\partial\Omega}\) is the graph Laplacian of the interaction matrix \(\bm B_{\partial\Omega}\), defined by
\begin{equation}
    (\bm B_{\partial\Omega})_{i,j}
    \coloneqq
    \begin{cases}
        \omega_i \omega_j |\bm x_i - \bm x_j|^{-m}, & i \neq j,\\
        0, & i=j,
    \end{cases}
\end{equation}
and
\begin{equation}
    \bm L_{\partial\Omega}
    \coloneqq
    \operatorname{diag}
    (\bm B_{\partial\Omega}\bm 1)
    -
    \bm B_{\partial\Omega}.
\end{equation}

\subsection{Estimation of Rayleigh quotients}
\label{sec:estimation_of_rayleigh_quotients}
Let \(m \in \mathbb N\), \(\Omega = [-1, 1]^m\) and let \(\square = \{0,\ldots,n_1\}\times\cdots\times\{0,\ldots,n_m\} \subset \mathbb N_0^m\) an index set. We estimate the Rayleigh quotients 
\begin{equation}
    \mathcal R_1 = \sup_{v \in \Pi_\square \setminus \{0\}} \frac{\|v\|_{H^1(\Omega)}}{\|v\|_{L^2(\Omega)}},
    \qquad
    \mathcal R_2 = \sup_{v \in \Pi_\square^\perp \setminus \{0\}} \frac{\|v\|_{L^2(\Omega)}}{\|v\|_{H^1(\Omega)}},
\end{equation}
\paragraph{Estimation of $\mathcal R_1$:} 
We employ a Markov–type inequality for polynomials. In the one-dimensional case it is known that every polynomial \(p\) with \(\deg p \le n\) fulfills
\begin{equation}
    \label{eq:markov_inequality}
    \|p'\|_{L^2(-1,1)} \le C_n \|p\|_{L^2(-1,1)},
\end{equation}
where the smallest constant satisfies \(C_n \lesssim n^2\), see \cite{Aleksov2016}.
Let \(v \in \Pi_\square\) and set \(n \coloneqq \max_{\bm \alpha \in \square} \max_i \alpha_i \). Applying equation~\eqref{eq:markov_inequality} to the one-dimensional slices \(x_i \mapsto v(\bm x)\) and integrating over the remaining variables yields
\begin{equation}
    \|\partial_j v\|_{L^2(\Omega)} 
    \le 
    C_n \|v\|_{L^2(\Omega)}.
\end{equation}
Hence
\begin{equation}
    \|v\|_{H^1(\Omega)}^2
    =
    \|v\|_{L^2(\Omega)}^2 + \sum_{i=1}^m \|\partial_i v\|_{L^2(\Omega)}^2
    \le
    (1+m C_n^2)\|v\|_{L^2(\Omega)}^2.
\end{equation}
Taking the supremum over \(v \in \Pi_\square \setminus \{0\}\) gives
\begin{equation}
    \mathcal R_1 \lesssim n^2.
\end{equation}

\paragraph{Estimation of $R_2$:} Let \(\psi_n : H^1(-1,1) \to \Pi_n\) denote the \(H^1(-1,1)\)-orthogonal projection onto the space of polynomials of degree at most \(n\). It is known that for every \(v \in H^1(-1,1)\)
\begin{equation}
    \|v - \psi_n v\|_{L^2(-1,1)} \lesssim n^{-1} \|v'\|_{L^2(-1,1)},
\end{equation}
see \cite[Eq.~(5.4.23)]{Canuto2007}. 
Multidimensional approximation on Cartesian-product domains follows from tensor-product polynomial expansions~\cite{Canuto2007}. Consequently, for \(\Omega = (-1,1)^m\) there exists a projection \(\psi_\square: H^1(\Omega) \to \Pi_\square\) fulfilling
\begin{equation}
    \label{eq:h1_approximation}
    \|v - \psi_\square v\|_{L^2(\Omega)} 
    \lesssim 
    n^{-1} |v|_{H^1(\Omega)}
\end{equation}
Now let \(v \in \Pi_\square^\perp\), where \(\Pi_\square^\perp\) denotes the \(H^1\)-orthogonal complement. Since \(\psi_\square v = 0\), we obtain
\begin{equation}
    \|v\|_{L^2(\Omega)} 
    = 
    \|v - \psi_\square v\|_{L^2(\Omega)} 
    \lesssim 
    n^{-1} |v|_{H^1(\Omega)} .
\end{equation}
Taking the supremum over \(v \in \Pi_\square^\perp \setminus\{0\}\) yields the following upper bound
\begin{equation}
    \mathcal R_2 
    = 
    \sup_{v \in \Pi_\square^\perp \setminus\{0\}} \left(1 + \frac{|v|^2_{H^1(\Omega)}}{\|v\|^2_{L^2(\Omega)}}\right)^{-1/2} \lesssim n^{-1}.
\end{equation}

\subsection{Comparison of conditioning bounds}
\label{sec:comparison_of_conditioning_bounds}
Using the \(L^2\)-based formulation in Equation~\eqref{eq:l2_residual_loss_functional} for the Laplace boundary value problem, and applying the Markov-type inequality from Equation~\eqref{eq:markov_inequality} and Theorem~\ref{theo:lions_and_magenes}, we obtain
\begin{equation}
    n^{-2} \|u\|_{H^1(\Omega)}
    \lesssim
    \|\Delta u\|_{L^2(\Omega)} + \|\operatorname{tr}(u)\|_{L^2(\partial\Omega)} 
    \lesssim 
    n^2 \|u\|_{H^1(\Omega)},
\end{equation}
for all \(u \in V\). Next, combining the previously derived coercivity and continuity estimates for SINOS (Prop.~\ref{prop:coercivity} and Prop.~\ref{prop:continuity}) with the Rayleigh quotient estimates from Section~\ref{sec:estimation_of_rayleigh_quotients}, yields
\begin{equation}
    n^{-1} \|u\|_{H^{-1}(\Omega)} 
    \lesssim 
    |u|_{-1, \Pi_\square},
    \qquad 
    |\Delta u|_{-1, \Pi_\square} 
    \lesssim 
    n^2 \|u\|_{H^1(\Omega)}.
\end{equation}
Analogously, we specify for D-SINOS (Prop.~\ref{prop:dual_coercivity} and Prop.~\ref{prop:dual_continuity})
\begin{equation}
    n^{-2} \|u\|_{H^{-1}(\Omega)} \lesssim |u|_{-1, \Pi_\square^*},
    \qquad 
    |\Delta u|_{-1, \Pi_\square^*} \lesssim \|u\|_{H^1(\Omega)}.
\end{equation}
Consequently, D-SINOS mitigates stiffness effects for the Laplace boundary value problem better than the \(L^2\)-based formulation and also better than SINOS. Specifically, SINOS satisfies
\begin{equation}
    n^{-1} \|u\|_{H^1(\Omega)}
    \le
    |\Delta u|_{-1, \Pi_\square} + \|\operatorname{tr}(u)\|_{H^{1/2}(\partial\Omega)}
    \le
    n^2 \|u\|_{H^1(\Omega)},
\end{equation}
for all \(u \in V\), and D-SINOS satisfies
\begin{equation}
    n^{-2} \|u\|_{H^1(\Omega)}
    \lesssim
    |\Delta u|_{-1, \Pi_\square^*} + \|\operatorname{tr}(u)\|_{H^{1/2}(\partial\Omega)}
    \lesssim
    \|u\|_{H^1(\Omega)},
\end{equation}
for all \(u \in V\). 

Hence, by means of Theorem~\ref{theo:lions_and_magenes}, the conditioning for elliptic boundary value problems~\ref{def:bvp} can be summarized as
\vspace{-0.5em}
\begin{equation}
\label{eq:summary_conditioning}
    \kappa_{L^2} \lesssim n^4, 
    \qquad
    \kappa_{\text{SINOS}} \lesssim n^3,
    \qquad
    \kappa_{\text{D-SINOS}} \lesssim n^2.
\end{equation}

\section{Numerical experiments}
\label{sec:numerical_experiments}

Our numerical experiments benchmark SINOS and D-SINOS in three complementary settings.

First, we compare D-SINOS with SINOS at the operator level in Section~\ref{sec:random_sampling_test}.
Here, we investigate how effective the action of the Laplace operator is balanced in the \(H^1\)-norm, providing a direct numerical test of the preconditioning effect predicted by the theory.

Second, we apply SINOS and D-SINOS to a Poisson problem with asymmetric nonhomogeneous boundary data in Section~\ref{sec:poisson_equation}, serving as a canonical second-order linear elliptic boundary value problem that verifies the predicted behavior in the context of PINNs.

Third, we consider Navier-Stokes equations, testing if SINOS and D-SINOS are also effective beyond purely elliptic equations, in a case where the Laplace operator remains a relevant component.

\subsection{Random sampling test of SINOS and D-SINOS operators}
\label{sec:random_sampling_test}
We begin with an experiment in 2D based on randomly generated input samples to assess the behavior of SINOS and D-SINOS with respect to the \(H^1\)-norm. Let \(\bm \Delta \coloneqq \bm D^{(1)}\bm D^{(1)} + \bm D^{(2)}\bm D^{(2)}\) denote the discretized Laplace operator. Given a function \(u \in V\), we consider the ratios
\begin{equation}
    \label{eq:discrete_norm_ratios}
    \frac{
        \|\Delta u\|_{-1,V}
    }{
        \|u\|_{1,V}
    } 
    = 
    \frac{\sqrt{
        (\bm \Delta \bm u)^\top \bm W \bm J^{-1} (\bm \Delta \bm u)
    }}{\sqrt{
        \bm u^\top \bm W \bm J \bm u
    }}, 
    \qquad
    \frac{
        \|\Delta u\|_{-1,V^*}
    }{
        \|u\|_{1,V}
    }
    =
    \frac{
        \sqrt{(\bm \Delta \bm u)^\top \bm W \bm K^{-1} (\bm \Delta \bm u)
    }}{\sqrt{
        \bm u^\top \bm W \bm J \bm u
    }},
\end{equation}
where the corresponding quadratic forms are defined in \eqref{eq:discrete_quadratic_forms} and \eqref{eq:discrete_quadratic_forms_negative_norm}. We evaluate their behavior and their computational cost in Figure~\ref{fig:norm_ratios}.

\paragraph{Direct method:} To compute the ratios in~\eqref{eq:discrete_norm_ratios}, we evaluate SINOS and D-SINOS by solving 
\begin{equation}
    \label{eq:linear_systems}
    \bm J \bm x = \bm \Delta \bm u, \qquad \bm K \bm x = \bm \Delta \bm u,
\end{equation}
with direct linear algebra, implemented via \texttt{scipy.linalg.solve}~\cite{Virtanen2020}.

\paragraph{Pseudoinverse:} Alternatively, we compute the solutions using the precomputed pseudoinverse of \(\bm J\) and \(\bm K\), implemented via \texttt{scipy.linalg.pinv}~\cite{Virtanen2020}.

\paragraph{Conjugate-Gradient method:}
We solve the linear system \eqref{eq:linear_systems} using a conjugate gradient (CG) method with zero initialization and Jacobi preconditioning~\cite{Quarteroni2007}, i.e., the inverse of the diagonal entries of \(\bm J\) and \(\bm K\), respectively. A relative tolerance of \(10^{-12}\) and a maximum of five iterations are used, implemented via \texttt{scipy.sparse.linalg.cg}~\cite{Virtanen2020}.

\paragraph{Random sampling:} We generate random samples \(\bm u\) with random sparsity by first drawing \(p \sim \mathcal{U}(0,1)\), then activating each component of \(\bm u\) independently with probability \(p\), ensuring at least one nonzero entry. Active components are drawn from \(\mathcal{U}[0,1]\), and \(\bm u\) is normalized in the discrete \(H^1\)-norm. The Laplace operator on \(V \subset H^1\) is then applied to \(\bm u\).

\begin{figure}[htbp]
    \centering
    \begin{subfigure}[t]{0.48\linewidth}
        \centering
        \includegraphics[width=\linewidth]{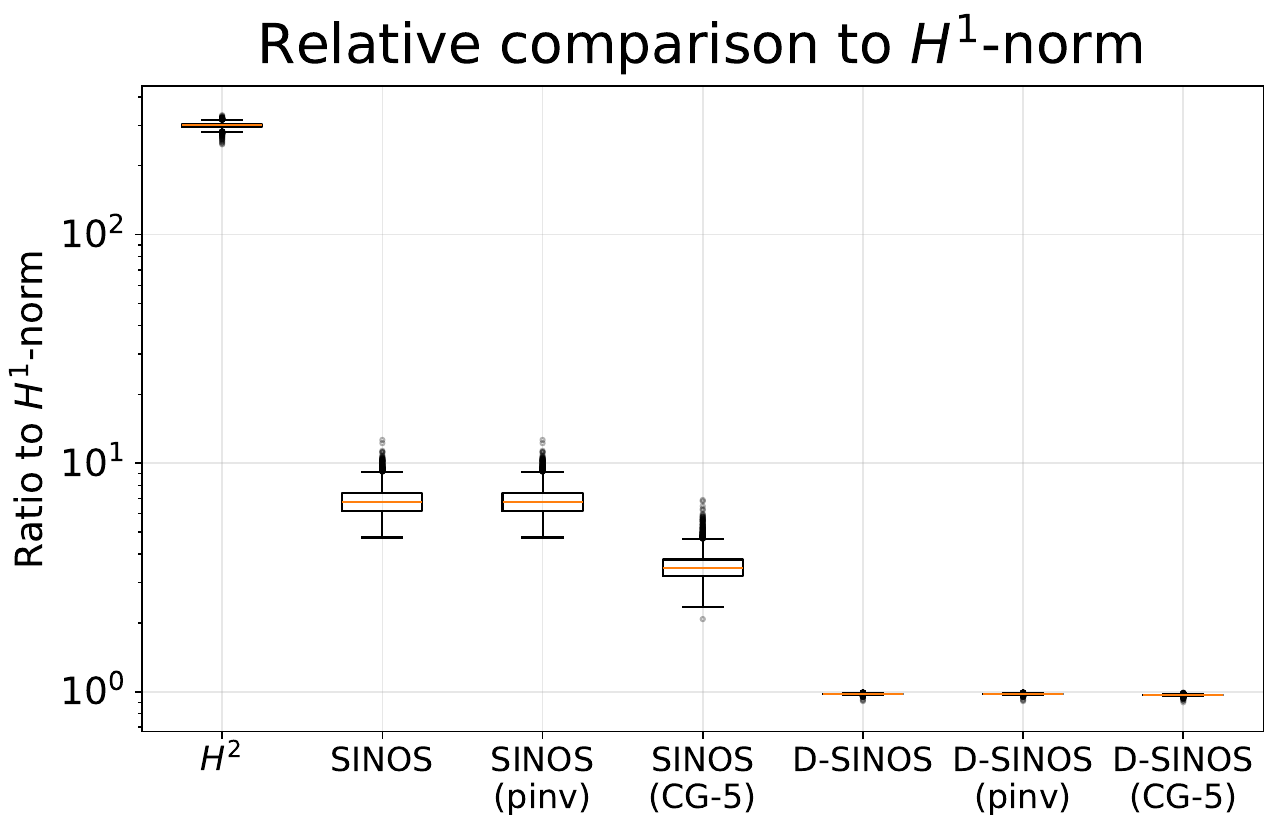}
        \caption{Values close to \(10^0\) indicate comparable magnitude to the \(H^1\)-norm.}
        \label{fig:norm_ratios_subfig1}
    \end{subfigure}
    \hfill
    \begin{subfigure}[t]{0.48\linewidth}
        \centering
        \includegraphics[width=\linewidth]{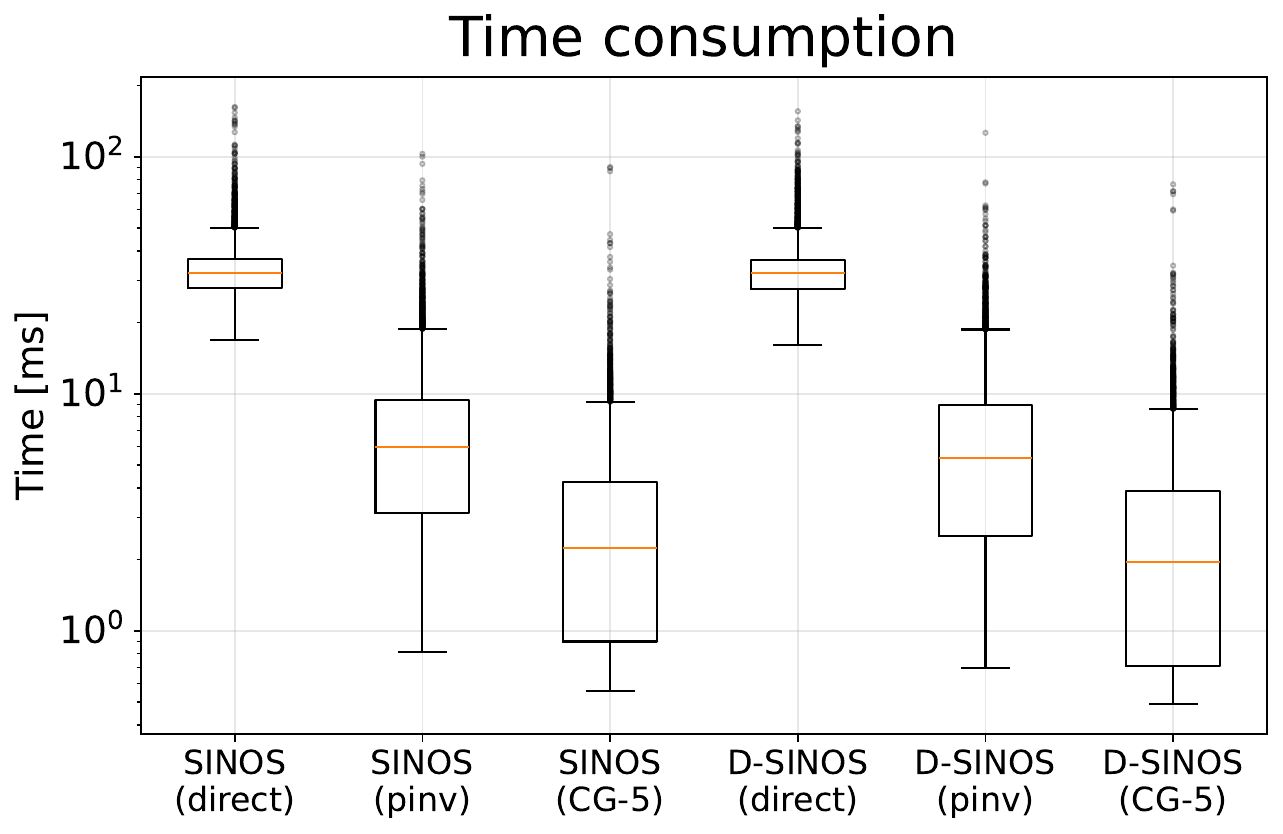}
        \caption{For each sample, the execution time is recorded in milliseconds.}
        \label{fig:norm_ratios_subfig2}
    \end{subfigure}
    
    \caption{Relative Comparison of SINOS, D-SINOS, and their CG-5 variants over 10{,}000 randomly generated samples on \((-1,1)^2\). The discretization uses tensorized Legendre-Lobatto nodes on a equidistant \(31 \times 31\) grid. 
    Each box-plot shows the sample-wise distribution using the standard Tukey convention, with whiskers extending to \(1.5\) times the IQR.}
    \label{fig:norm_ratios}
\end{figure}

\paragraph{Worst-case ratios:}
Using the experimental setup described in Figure~\ref{fig:norm_ratios}, we investigate the extremal behavior of the operators \(\bm J\) and \(\bm K\) by solving generalized eigenvalue problems. Given Proposition~\ref{prop:continuity} and Proposition~\ref{prop:dual_continuity}, we consider SINOS in the \(H^2\)-norm and D-SINOS in the \(H^1\)-norm, respectively.

We consider the symmetric matrix
\begin{equation}
    \bm B
    =
    \bm \Delta^\top \bm W \bm M^{-1} \bm \Delta,
\end{equation}
and solve the eigenproblem \(\bm B \bm x = \lambda \bm B_{H^k} \bm x\), with \(\bm B_{H^k}\) denoting the discrete \(H^k\)-inner product matrix 
\begin{equation}
    \|u\|_{H^k} = \sqrt{ \bm u^\top \bm B_{H^k} \bm u}
\end{equation} 
for \(u \in V\). We normalize the resulting dominant eigenvector with respect to the \(H^k\)-norm and use it to evaluate the maximum of the norm ratios in Table~\ref{tab:norm_ratios}. \\ 

\begin{table}[H]
    \centering
    \caption{Maximum norm ratios for SINOS and D-SINOS.}
    \label{tab:norm_ratios}
    
    \begin{tabular}{l|c|c|c|c}
    \textbf{Method} & $\bm M^{-1}$ & \textbf{Norm} & \textbf{Ratio} & \textbf{Max value} \\
    \hline
    SINOS 
    & $\bm J^{-1}$ 
    & $k=2$ 
    & ${\|\Delta u\|_{-1,V}}\,/\,{\|u\|_{H^2(\Omega)}}$ 
    & $\approx 1.23$ \\
    D-SINOS 
    & $\bm K^{-1}$ 
    & $k=1$ 
    & ${\|\Delta u\|_{-1,V^*}}\,/\,{\|u\|_{H^1(\Omega)}}$ 
    & $\approx 0.85$
    \end{tabular}
\end{table}

In conclusion, Figure~\ref{fig:norm_ratios_subfig1} clearly shows that D-SINOS resolves the stiffness problem
\begin{equation}
    \|\Delta u\|_{-1,V^*} \approx \mathcal O \left(\|u\|_{H^1(\Omega)}\right).
\end{equation} 
In contrast, SINOS does not fully resolve the stiffness, but partially mitigates it. Figure~\ref{fig:norm_ratios_subfig1} suggests that 
\begin{equation}
    \|\Delta u\|_{-1,V} \ll \|\Delta u\|_{L^2(\Omega)} =\mathcal{O}(\|u\|_{H^2(\Omega)}).
\end{equation} 
However, the worst-case analysis in Table~\ref{tab:norm_ratios} reveals that there exist instances in which the amplification for SINOS exceeds the \(H^2\)-norm, whereas for D-SINOS it remains below the corresponding \(H^1\)-norm, confirming that D-SINOS effectively controls the stiffness. Moreover, Figure~\ref{fig:norm_ratios_subfig2} shows that SINOS and D-SINOS can be computed efficiently using a CG-5 method while retaining its ability to resolve stiffness.

\subsection{Application to PINNs}
\label{sec:application_to_pinns}
Now, we confirm our theoretical findings for elliptic boundary value problems~\ref{def:bvp}.
We further demonstrate that D-SINOS remains effective beyond elliptic equations by applying it to the stationary incompressible Navier-Stokes equations.

SINOS and D-SINOS have already been applied to a various examples in~\cite{Suarez2024}, but they were combined with the \(L^2\) boundary loss. 
Here, we use a proper approximation of the \(H^{1/2}\) boundary loss and study the optimization behavior in detail. This means tracking the error during training, and comparing how fast the different formulations converge.

We also address the practical realization of the SINOS and D-SINOS operators by using a fixed Gauss-Legendre reference grid \(P_\Omega\) which decomposes the physical domain into replicas. 
In two dimensions, this corresponds to a decomposition into \(r_x \times r_y\) cells.
This yields sparse representations for SINOS and D-SINOS, and makes the approach suitable for PINN computations.

\subsubsection{Poisson equation with asymmetric nonzero boundary}
\label{sec:poisson_equation}
We consider the Poisson problem for \(\Omega = (-1,1)^2\) and \(u \in H^2(\Omega)\)
\begin{equation} 
    \label{eq:poisson-problem}
    \Delta u = f \quad \text{in } \Omega \coloneqq (-1,1)^2, 
    \qquad 
    u = g \quad \text{on } \partial\Omega , 
\end{equation} 
where we use the exact solution
\begin{equation}
    \label{eq:poisson_solution}
    u^*(x,y) 
    = 
    \alpha_{\exp} \exp(ax+by) + \alpha_{\mathrm{osc}} \sin(k_x x + k_y y) + \alpha_{\mathrm{poly}} xy + \ell_x x + \ell_y y,
\end{equation}
to prescribe the forcing term and boundary data as
\begin{equation}
    f = \Delta u^*, \qquad g = u^*_{\big|\partial\Omega}.
\end{equation}
The parameters are chosen as 
\begin{equation} 
    \begin{gathered} 
        \alpha_{\exp}=0.25, \qquad \alpha_{\mathrm{osc}}=0.75,\qquad \alpha_{\mathrm{poly}}=0.5,\\ \ell_x=0.10,\qquad \ell_y=-0.20,\qquad a=2.0,\qquad b=-0.75,\qquad k_x=8.0,\qquad k_y=2.0. 
    \end{gathered} 
\end{equation} 
This test problem combines anisotropic, oscillatory, polynomial, and linear components. The solution is neither axis-symmetric nor homogeneous on the boundary, making it well suited for testing the treatment of nontrivial boundary data.

We use a fully connected PINN with sine activation functions and five hidden layers of width 50. 
The Poisson problem is discretized on a tensor-product Gauss-Legendre grid with polynomial degrees \newline \(n_x = n_y = 10\). 
The domain is decomposed into \(r_x \times r_y = 5 \times 5\) cells, resulting in
\begin{equation}
    2r_y(n_y+1) + 2r_x(n_x+1)
    =
    220
\end{equation}
boundary collocation points. We compare the training of the three loss functions of the form
\begin{equation}
    \mathcal L_{\Omega} + \mathcal L_{\partial \Omega} \longrightarrow \min,
\end{equation}
detailed in Table~\ref{tab:losses_poisson}. \\

\begin{table}[htbp]
\centering
\caption{Loss functions used in the Poisson experiment.}
\label{tab:losses_poisson}
    \begin{tabular}{lll}
    \toprule
    Method & Domain loss $\mathcal L_\Omega$ & Boundary loss $\mathcal L_{\partial\Omega}$ \\
    \midrule
    MSE
    &
    \(
    \frac{1}{|P_\Omega|}
    \bm r_\Omega^\top \bm r_\Omega
    \)
    &
    \(
    \frac{1}{|P_{\partial\Omega}|}
    \bm r_{\partial\Omega}^\top \bm r_{\partial\Omega}
    \)
    \\[2ex]
    
    SINOS
    &
    \(
        \bm r_\Omega^\top \bm W \bm J^{-1} \bm r_\Omega
    \)
    &
    \(
        \bm r_{\partial\Omega}^{\top} \bm W_{1/2, \partial\Omega} \bm r_{\partial\Omega}
    \)
    \\[2ex]
    
    D-SINOS
    &
    \(
        \bm r_\Omega^{\top} \bm W \bm K^{-1} \bm r_\Omega
    \)
    &
    \(
        \bm r_{\partial\Omega}^{\!\top} \bm W_{1/2,\partial\Omega} \bm r_{\partial\Omega}
    \)
    \\
    \bottomrule
    \end{tabular}
\end{table}

Here, \(\bm r_\Omega\) and \(\bm r_{\partial\Omega}\) denote the PDE and boundary residual vectors, respectively.

Figure~\ref{fig:ground_truth_poisson} shows the ground truth \(u^*\). 
The PDE-to-boundary loss ratios in Figure~\ref{fig:loss_ratio_poisson} compare the evolution of the PDE and its boundary during L-BFGS training~\cite{Liu1989}. Figures~\ref{fig:rmse_history_poisson} and~\ref{fig:max_error_history_poisson} show the corresponding RMSE and maximum error histories on the evaluation grid. Figure~\ref{fig:results_poisson} compares the trained predictions and the pointwise absolute errors. \\

\begin{figure}[htbp]
    \centering
    \begin{subfigure}[t]{.48\textwidth}
        \centering
        \includegraphics[width=0.85\textwidth]{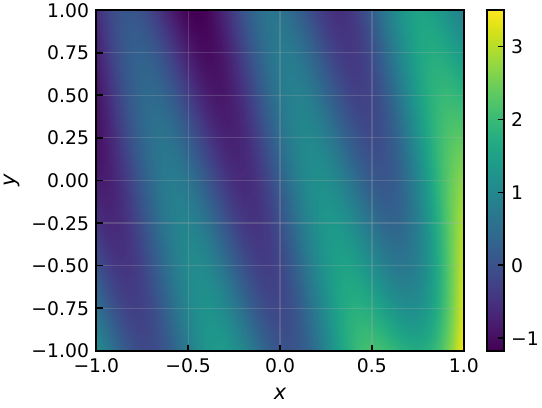}
        \caption{Ground truth \(u^*\).}
        \label{fig:ground_truth_poisson}
    \end{subfigure}
    % \hfill
    \begin{subfigure}[t]{.48\textwidth}
        \centering
        \includegraphics[width=0.8\textwidth]{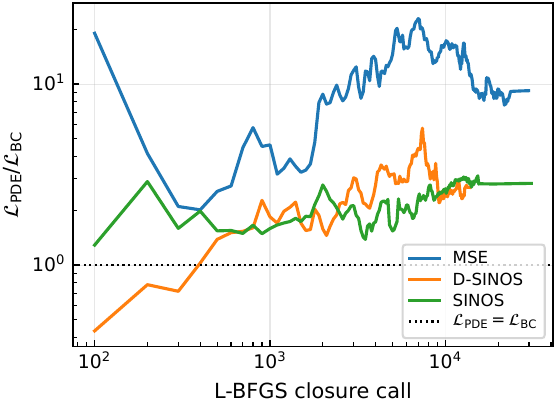}
        \caption{PDE-to-boundary loss ratio history.}
        \label{fig:loss_ratio_poisson}
    \end{subfigure}
    
    \vspace{1.25em}
    
    \begin{subfigure}[t]{.48\textwidth}
        \centering
        \includegraphics[width=0.8\textwidth]{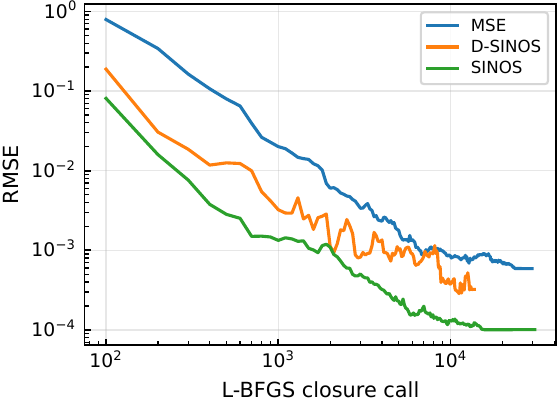}
        \caption{RMSE history.}
        \label{fig:rmse_history_poisson}
    \end{subfigure}
    \hfill
    \begin{subfigure}[t]{.48\textwidth}
        \centering
        \includegraphics[width=0.8\textwidth]{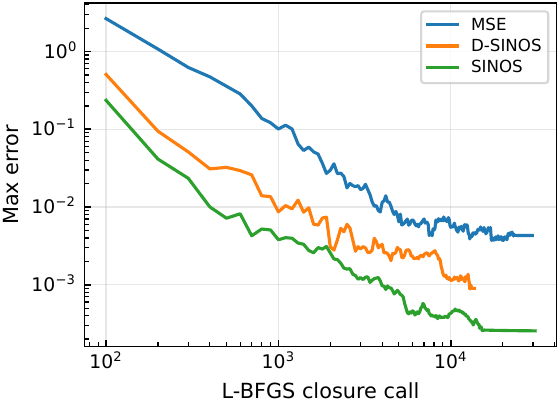}
        \caption{Maximum-error history.}
        \label{fig:max_error_history_poisson}
    \end{subfigure}
    \caption{
        Ground truth, training loss histories, and RMSE and maximum error histories for MSE, SINOS,
        and D-SINOS of the Poisson PINN experiment are shown. For the evaluation, an equidistant
        \(201 \times 201\) grid is used.
    }
    \label{fig:overview_poisson}
\end{figure}

\begin{figure}[htbp] 
    \centering
    \includegraphics[width=0.85\textwidth]{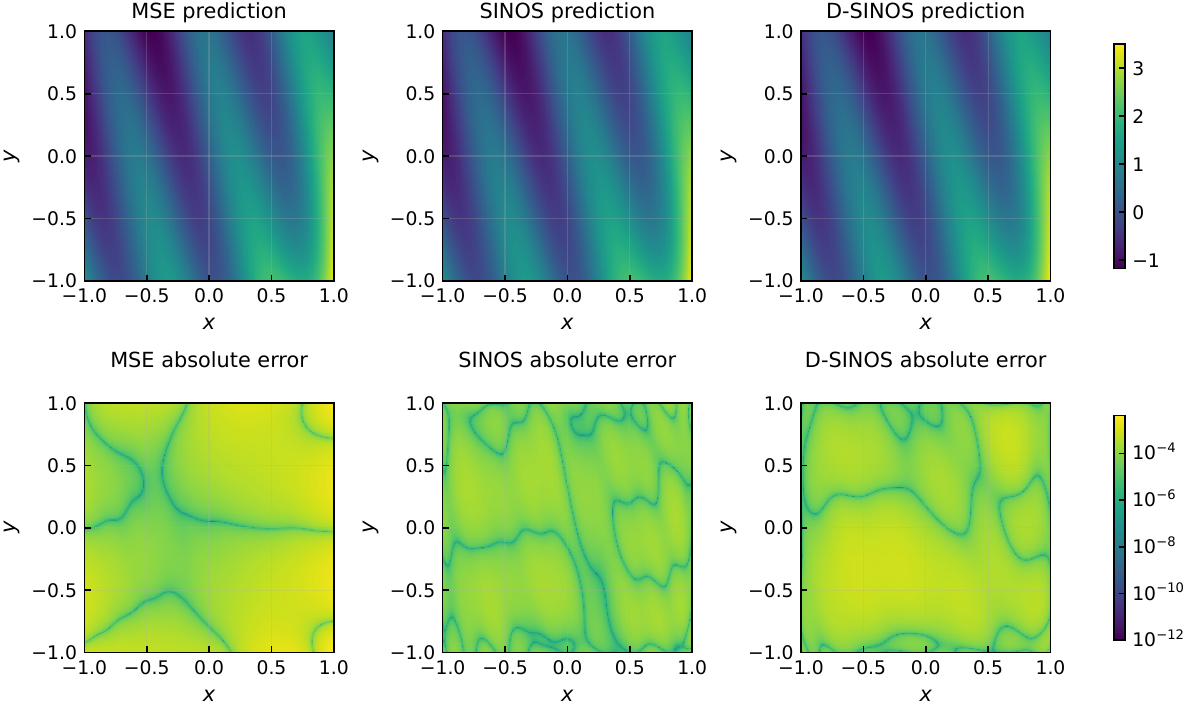} 
    \caption{
        Final predictions and pointwise absolute errors for MSE, SINOS, and D-SINOS. 
        For the evaluation, an equidistant \(201 \times 201\) grid is used.
    } 
    \label{fig:results_poisson}
\end{figure}

\subsubsection{Stationary incompressible Navier-Stokes equations}
We consider the stationary incompressible Navier-Stokes problem for \(\Omega = (-1,1)^2\) 
\begin{equation} 
    \begin{aligned} 
        -\nu \Delta u + (u \cdot \nabla)u + \nabla p &= f && \text{in } \Omega, \\ 
        \nabla \cdot u &= 0 && \text{in } \Omega, \\ 
        u &= g && \text{on } \partial \Omega . 
    \end{aligned}
\end{equation}
Here, \(u \in H^2(\Omega)^2\) denotes the velocity field satisfying the prescribed Dirichlet boundary condition, \(p \in H^1(\Omega)\) the pressure, \(f\) an external force, and \(\nu>0\) the viscosity. 

The term \(-\nu \Delta u\) is the second-order elliptic diffusion part, while \((u \cdot \nabla)u\) is a nonlinear first-order transport term. The pressure gradient \(\nabla p\) couples velocity and pressure, and \(\nabla \cdot u = 0\) imposes incompressibility. Hence, this test problem is a nonlinear coupled system with an elliptic diffusion component, making it a suitable test case for further assessing SINOS and D-SINOS.

As in the previous test problem, we use an exact solution and choose the boundary data and forcing term accordingly. Let the stream function
\begin{equation}
    \psi(x,y) = A \sin(\pi x)\sin(\pi y),
\end{equation} 
and define the exact velocity field
\begin{equation} 
    u^*(x,y) 
    = 
    \begin{pmatrix} 
        \partial_y \psi(x,y) \\ 
        -\partial_x \psi(x,y) 
    \end{pmatrix} 
    = 
    A\pi \begin{pmatrix} 
        \sin(\pi x)\cos(\pi y) \\ 
        -\cos(\pi x)\sin(\pi y) 
    \end{pmatrix}. 
\end{equation} 
This construction implies \(\nabla \cdot u^* = 0\). 
The pressure is chosen as 
\begin{equation} 
    p^*(x,y) = \alpha_p \sin(2\pi x + \pi y), 
\end{equation} 
where \(A\) and \(\alpha_p\) are fixed. 
The Dirichlet boundary data are given by the trace of the velocity
\(
    g = u^*|_{\partial \Omega}. 
\)
The force term is then obtained by inserting \((u^*,p^*)\) into the equation
\begin{equation} 
    f = -\nu \Delta u^* + (u^* \cdot \nabla)u^* + \nabla p^*.
\end{equation} 
Thus, 
\(
    f = (f_1, f_2)^\top 
\) 
is given by 
\begin{align} 
    f_1(x,y) 
    &= 
    2\nu \pi^2 u^\ast_1(x,y) + A^2\pi^3 \sin(\pi x)\cos(\pi x) + 2\pi \alpha_p \cos(2\pi x+\pi y), \\ 
    f_2(x,y) 
    &= 
    2\nu \pi^2 u^\ast_2(x,y) + A^2\pi^3 \sin(\pi y)\cos(\pi y) + \pi \alpha_p \cos(2\pi x+\pi y). 
\end{align}
The parameters are chosen as
\begin{equation}
    \nu = 0.05, 
    \qquad 
    A = 1.0, 
    \qquad 
    \alpha_p = 0.5.
\end{equation}

We use a fully connected PINN with sine activation functions and five hidden layers of width 80. 
The Navier-Stokes problem is discretized on a tensor-product Gauss-Legendre grid with polynomial degrees \(n_x = n_y = 10\). 
The domain is decomposed into \(r_x \times r_y = 8 \times 8\) cells, resulting in
\begin{equation}
    2r_y(n_y+1) + 2r_x(n_x+1)
    =
    352
\end{equation}
boundary collocation points. We compare the training of the three loss functions of the form
\begin{equation}
    \mathcal L_{\text{mom}} + \mathcal L_{\text{div}} + \mathcal L_{\partial \Omega}  \longrightarrow \min,
\end{equation}
detailed in Table~\ref{tab:losses_nse}. \\

\begin{table}[htbp]
    \centering
    \caption{Loss functions used in the stationary incompressible Navier-Stokes experiment.}
    \label{tab:losses_nse}
    \small
    \begin{tabular}{llll}
    \toprule
    Method 
    & Momentum loss $\mathcal L_{\mathrm{mom}}$ 
    & Incompressibility loss $\mathcal L_{\mathrm{div}}$ 
    & Boundary loss $\mathcal L_{\partial\Omega}$ \\
    \midrule
    
    MSE
    &
    \(
    \frac{1}{|P_\Omega|}
    \bm r_{\mathrm{mom}}^\top \bm r_{\mathrm{mom}}
    \)
    &
    \(
    \frac{1}{|P_\Omega|}
    \bm r_{\mathrm{div}}^\top \bm r_{\mathrm{div}}
    \)
    &
    \(
    \frac{1}{|P_{\partial\Omega}|}
    \bm r_{\partial\Omega}^\top \bm r_{\partial\Omega}
    +
    \left(
        \frac{1}{|P_\Omega|}
        \bm 1^\top \bm p
    \right)^2
    \)
    \\[2ex]
    
    SINOS
    &
    \(
    \bm r_{\mathrm{mom}}^\top \bm W \bm J^{-1} \bm r_{\mathrm{mom}}
    \)
    &
    \(
    \bm r_{\mathrm{div}}^\top \bm W \bm r_{\mathrm{div}}
    \)
    &
    \(
    \bm r_{\partial\Omega}^{\top} \bm W_{1/2,\partial\Omega} \bm r_{\partial\Omega}
    +
    \left(
        \bm 1^\top \bm W \bm p
    \right)^2
    \)
    \\[2ex]
    
    D-SINOS
    &
    \(
    \bm r_{\mathrm{mom}}^\top \bm W \bm K^{-1} \bm r_{\mathrm{mom}}
    \)
    &
    \(
    \bm r_{\mathrm{div}}^\top \bm W \bm r_{\mathrm{div}}
    \)
    &
    \(
    \bm r_{\partial\Omega}^{\top} \bm W_{1/2, \partial\Omega} \bm r_{\partial\Omega}
    +
    \left(
        \bm 1^\top \bm W \bm p
    \right)^2
    \)
    \\
    \bottomrule
    \end{tabular}
\end{table}

Here, \(\bm r_{\mathrm{mom}}\) denotes the residual vector of the two momentum equations, 
\(\bm r_{\mathrm{div}}\) the incompressibility residual, \(\bm r_{\partial\Omega}\) the velocity boundary residual, 
and \(\bm p\) the vector of predicted pressure values.
The pressure gauge loss fixes the additive constant of the pressure. 
For SINOS and D-SINOS, it is evaluated using the same quadrature rule as the interior loss.

The ground truth velocity and pressure are shown in Figure~\ref{fig:ground_truth_nse}.
The PDE-to-boundary loss ratios in Figure~\ref{fig:loss_ratio_nse} show the evolution of the PDE and its boundary during L-BFGS training~\cite{Liu1989}.
Figures~\ref{fig:rmse_history_velocity_nse}, \ref{fig:max_error_history_velocity_nse}, \ref{fig:rmse_history_pressure_nse}, and~\ref{fig:max_error_history_pressure_nse} show the corresponding RMSE and maximum-error histories for the velocity and pressure.
Figures~\ref{fig:results_velocity_nse} and~\ref{fig:results_pressure_nse} compare the trained velocity and pressure predictions with their pointwise absolute errors. \\

\begin{figure}[htbp] 
    \centering 
    \includegraphics[width=0.85\textwidth]{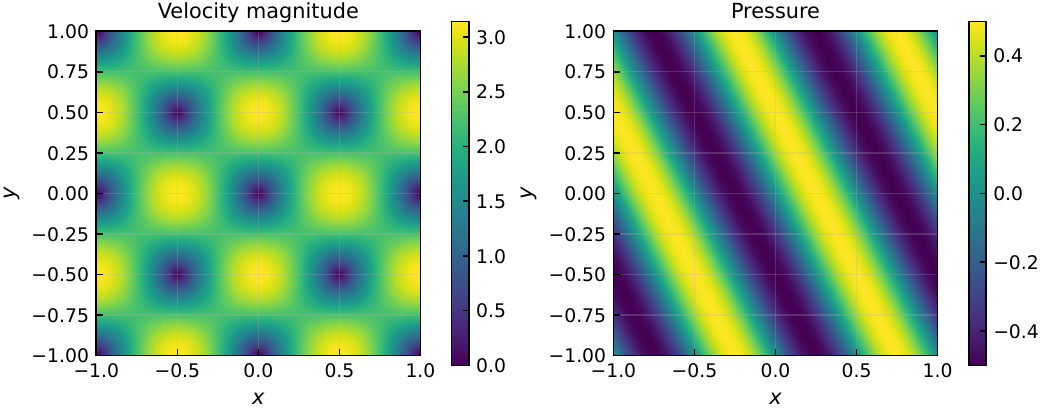} 
    \caption{Ground truth velocity \(u^*\) and pressure \(p^*\) for the stationary incompressible Navier-Stokes experiment.} 
    \label{fig:ground_truth_nse}
\end{figure}

\begin{figure}[htbp]
    \centering

    \begin{subfigure}[t]{0.5\textwidth}
        \centering
        \includegraphics[width=\textwidth]{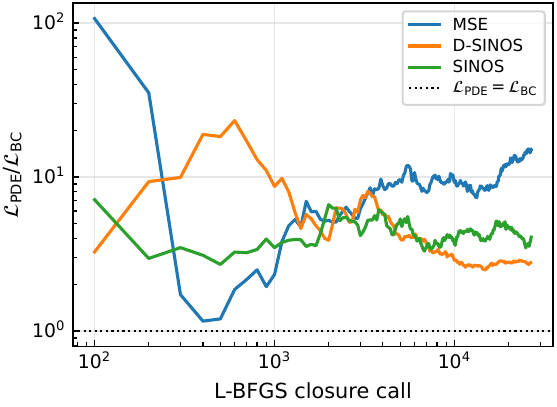}
        \caption{PDE-to-boundary loss ratio history.}
        \label{fig:loss_ratio_nse}
    \end{subfigure}

    \vspace{0.8em}

    \begin{subfigure}[t]{0.48\textwidth}
        \centering
        \includegraphics[width=\textwidth]{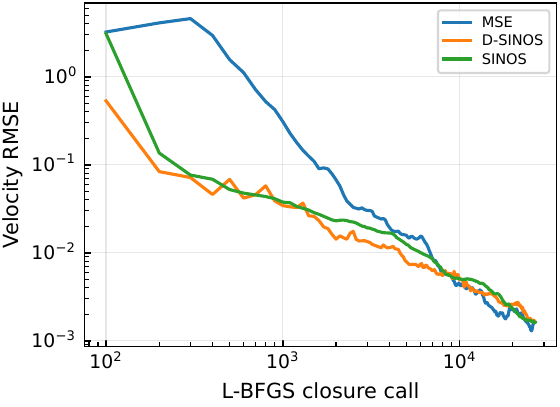}
        \caption{Velocity RMSE history.}
        \label{fig:rmse_history_velocity_nse}
    \end{subfigure}
    \hfill
    \begin{subfigure}[t]{0.48\textwidth}
        \centering
        \includegraphics[width=\textwidth]{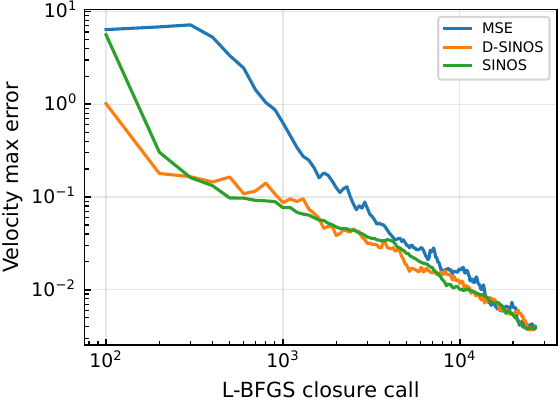}
        \caption{Velocity maximum-error history.}
        \label{fig:max_error_history_velocity_nse}
    \end{subfigure}

    \vspace{0.6em}

    \begin{subfigure}[t]{0.48\textwidth}
        \centering
        \includegraphics[width=\textwidth]{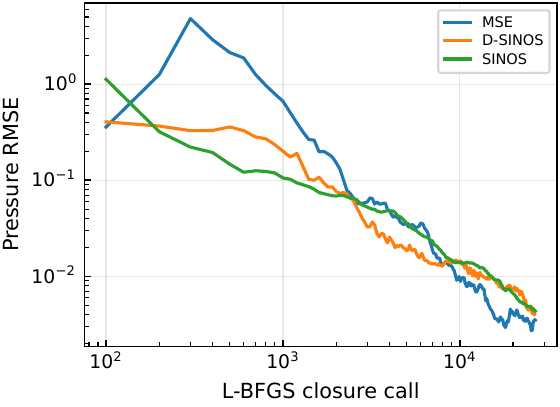}
        \caption{Pressure RMSE history.}
        \label{fig:rmse_history_pressure_nse}
    \end{subfigure}
    \hfill
    \begin{subfigure}[t]{0.48\textwidth}
        \centering
        \includegraphics[width=\textwidth]{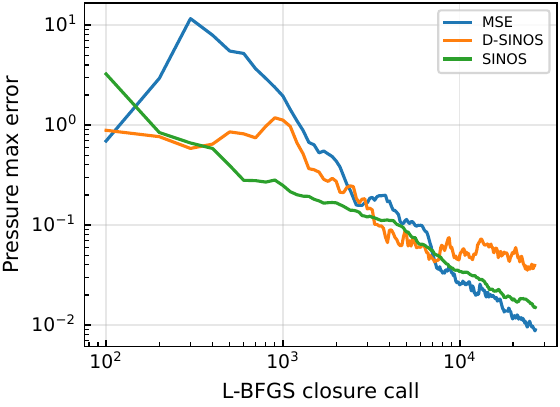}
        \caption{Pressure maximum-error history.}
        \label{fig:max_error_history_pressure_nse}
    \end{subfigure}
    \caption{
        Training loss histories, RMSE and maximum-error histories for MSE, SINOS,
        and D-SINOS of the Navier-Stokes PINN experiment are shown. For the evaluation, an equidistant \(201 \times 201\) grid is used.
    }
    \label{fig:overview_nse}
\end{figure}

\begin{figure}[htbp] 
    \centering 
    \includegraphics[width=0.95\textwidth]{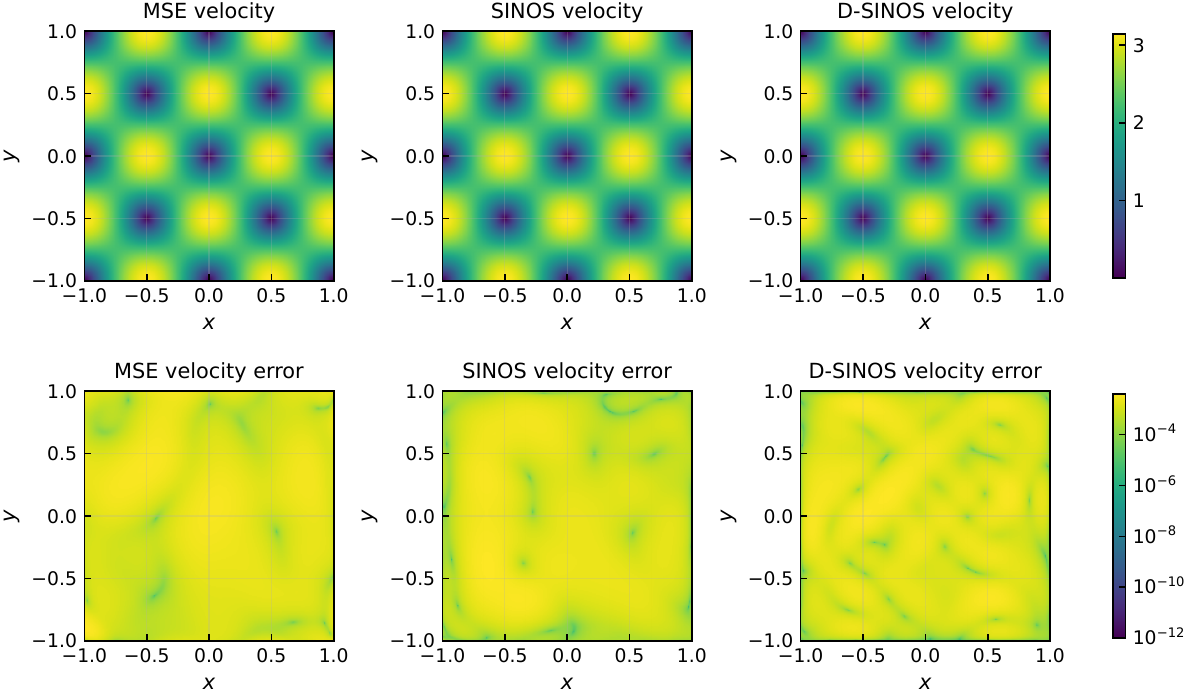}
    \caption{
        Velocity predictions and pointwise absolute velocity errors for MSE, SINOS, and D-SINOS after \(2000\) L-BFGS iterations.
        For the evaluation, an equidistant \(201 \times 201\) grid is used.
    } 
    \label{fig:results_velocity_nse} 
\end{figure}

\begin{figure}[htbp]
    \centering
    \includegraphics[width=0.95\textwidth]{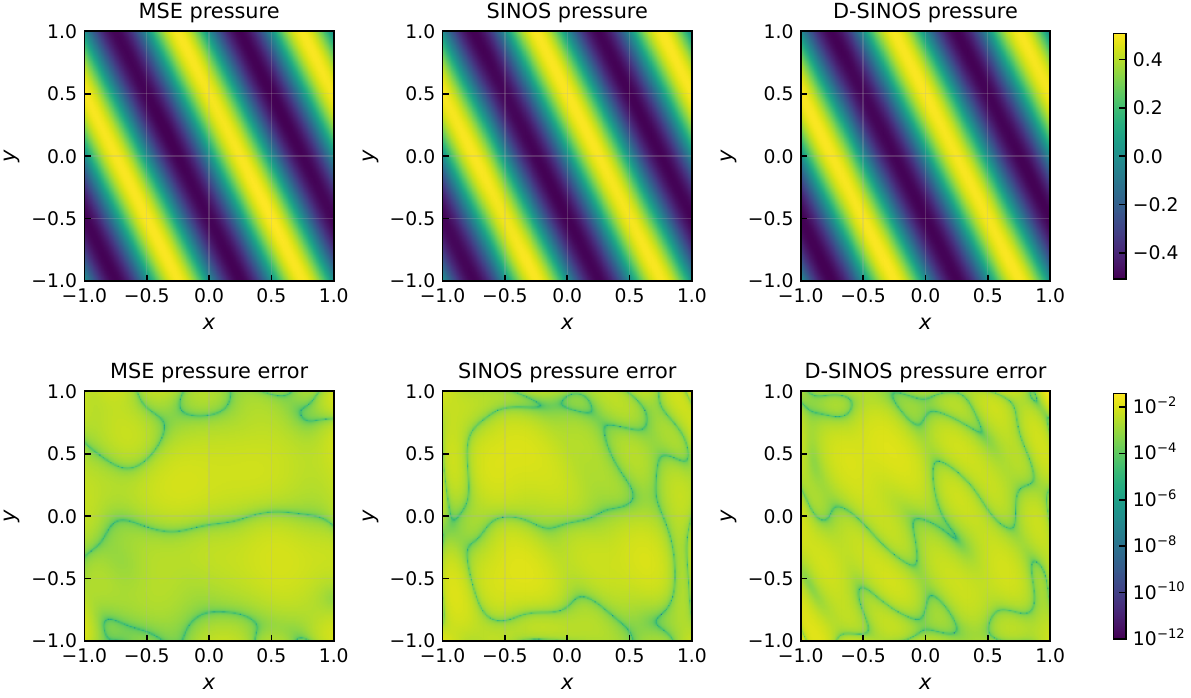}
    \caption{
        Pressure predictions and pointwise absolute pressure errors for MSE, SINOS, and D-SINOS after \(2000\) L-BFGS iterations.
        For the evaluation, an equidistant \(201 \times 201\) grid is used.
    }
    \label{fig:results_pressure_nse}
\end{figure}

\subsubsection{Summary of PINN results}

\paragraph{Poisson experiment:}

The PDE-to-boundary loss ratio in Figure~\ref{fig:loss_ratio_poisson} shows that SINOS and D-SINOS reduce the imbalance between the PDE and boundary residuals. In contrast to the MSE formulation, their ratios remain closer to one over the course of the L-BFGS optimization. This indicates a more balanced optimization problem and is consistent with the predicted mitigation of stiffness effects.

The error histories in Figure~\ref{fig:rmse_history_poisson} and Figure~\ref{fig:max_error_history_poisson} show that this improved balance affects the convergence behavior and final prediction quality. 
Among the three methods, SINOS performs best, D-SINOS shows intermediate performance, and the MSE baseline performs worst.
For the RMSE, SINOS approaches an error of \(10^{-5}\), D-SINOS reaches \(10^{-4}\), while the MSE baseline only reaches \(10^{-3}\).
A similar behavior is observed for the maximum error, where SINOS reaches an error of order \(10^{-4}\), while D-SINOS only just reaches the \(10^{-4}\) level and the MSE baseline remains in the upper \(10^{-3}\) range, closer to \(10^{-2}\).

Figure~\ref{fig:results_poisson} shows that all three methods produce visually accurate predictions. 
The corresponding error plots reveal localized regions where the error is very small, while residual errors remain distributed across the domain. 
SINOS yields the smallest errors overall, mostly around \(10^{-4}\) or below, whereas MSE and D-SINOS retain larger error levels in parts of the domain.

Overall, this experiment confirms that the proposed loss formulations mitigate stiffness in practice, with SINOS giving the best performance for the Poisson test case.

\vspace{-0.75em}

\paragraph{Navier-Stokes experiment:}
This test case is more challenging than the Poisson problem, since the elliptic contribution enters only through the viscous term \(-\nu \Delta u\). With the small viscosity \(\nu=0.05\), this contribution is relatively weak. 

The error histories in Figures~\ref{fig:rmse_history_velocity_nse}--\ref{fig:max_error_history_pressure_nse} show that SINOS and D-SINOS produce a rapid error decay during the first L-BFGS closure calls.

For the velocity, all three methods reach comparable final accuracies.
The final velocity RMSE and the final maximum velocity error are of order \(10^{-3}\).
For the pressure, all three methods reach comparable RMSE values of order \(10^{-3}\).
For the maximum pressure error, SINOS and D-SINOS only reach errors of order \(10^{-2}\), while D-SINOS stagnates at a larger error level. 
The MSE baseline reaches a maximum pressure error of order \(10^{-3}\).

The prediction plots in Figures~\ref{fig:results_velocity_nse} and~\ref{fig:results_pressure_nse} show that all three methods reproduce the qualitative structure of the velocity magnitude and pressure field.
The corresponding absolute error plots reveal small localized regions with very low error, while residual errors remain distributed across the domain. 
For the velocity field, the errors are mostly of order \(10^{-3}\) to \(10^{-4}\), whereas the pressure errors are generally larger with an order of around \(10^{-2}\).

Overall, this experiment shows that SINOS improves the optimization behavior also when the elliptic contribution is weak. 
However, the pressure results show that improved residual balancing does not always translate into uniformly better prediction errors for all components of the solution.
SINOS is the more robust choice in this experiment, since it avoids the pressure error stagnation observed for D-SINOS while retaining the improved early convergence behavior.

\section{Conclusion}

We studied numerical stiffness in least-squares formulations of PINNs for elliptic boundary value problems with arbitrary nonzero Dirichlet boundary data. 
Starting from the observation that standard \(L^2\)-based PINN losses overemphasize the PDE residual and underweight the boundary residual, we reformulated the residual loss in negative Sobolev norms. 
In particular, we used the \(H^{-1}\) norm for the PDE residual and the \(H^{1/2}\) norm for the boundary residual, which yields a well-posed residual functional, equivalent to the squared \(H^1\) error, see Theorem~\ref{theo:lions_and_magenes} and Corollary~\ref{cor:residual_loss_functional}.

Then, we defined and analyzed SINOS and D-SINOS, two finite-dimensional approximations of the \(H^{-1}\) norm. 
SINOS restricts the \(H^{-1}\) test space to a finite-dimensional subspace, while D-SINOS additionally uses a norm based on adjoint derivative operators.
We proved continuity and coercivity estimates for both and showed that the constants are governed by Rayleigh quotients of the approximation space, see Propositions~\ref{prop:continuity}-\ref{prop:dual_coercivity}.
For polynomial spaces, these estimates yield improved conditioning compared with the standard \(L^2\) loss. 
The \(L^2\)-based formulation scales as \(\kappa_{L^2} \in \mathcal O(n^4)\), SINOS as \(\kappa_{\text{SINOS}} \in \mathcal O(n^3)\), and D-SINOS as \(\kappa_{\text{D-SINOS}} \in \mathcal O(n^2)\), see Section~\ref{sec:estimation_of_rayleigh_quotients} and Section~\ref{sec:comparison_of_conditioning_bounds}.

We discretized these formulations using Sobolev cubature rules in polynomial spaces, see Section~\ref{sec:polynomial_space} and Section~\ref{sec:sobolev_cubature_rules}.
The resulting loss terms can be evaluated either by precomputed pseudoinverses or by iterative solvers such as conjugate gradients.
The operator-level experiments confirmed the predicted rebalancing and conditioning behavior, see Section~\ref{sec:random_sampling_test}, while the PINN experiments for the Poisson and stationary incompressible Navier-Stokes equations showed that the theoretical improvements translate into better training behavior and more accurate solutions, see Section~\ref{sec:application_to_pinns}.
Overall, the experiments support the main theoretical conclusion that negative Sobolev residual losses, in practice especially the SINOS discretization, mitigates stiffness and boosts convergence for elliptic PDEs.

\paragraph{\small Acknowledgments}
{\small
    We acknowledge 
    Juan-Esteban Suarez for insightful comments and helpful discussions.
}
\vspace{-1em}
\paragraph{\small Funding}
{\small
    This work was partially funded by the Center for Advanced Systems Understanding (CASUS), financed by Germany's Federal Ministry of Education and Research (BMBF) and by the Saxon Ministry for Science, Culture and Tourism (SMWK) with tax funds on the basis of the budget approved by the Saxon State Parliament.
}
\vspace{-1em}
\paragraph{\small Code availability}
{\small 
    The source code is available at \url{https://github.com/phil-hofmann/SINOS}.
}
% \vspace{-1em}
% \paragraph{\small Competing interests}
% {\small
%     The authors declare no competing interests.
% }
% \vspace{-1em}
% \paragraph{\small Ethics approval and consent to participate}
% {\small
%     Not applicable.
% }

\newpage

\appendix

\section{Proof of Theorem \ref{theo:negative_norm_decomposition}}
\label{app:negative_norm_decomposition}
Since \(H^1(\Omega) = V \oplus V^\perp\), every \(u \in H^1(\Omega)\) decomposes uniquely as \(u = v + w\) with \(v \in V\) and \(w \in V^\perp\). Setting \(\alpha \coloneqq |u|_{-1, V}\), \(\beta \coloneqq |u|_{-1, V^\perp }\), we obtain the upper bound
\begin{equation}
    \langle u, v + w \rangle_{L^2(\Omega)} 
    \le
    \alpha \|v\|_{H^1(\Omega)} + \beta \|w\|_{H^1(\Omega)}.
\end{equation}
Using the Pythagorean identity in \(H^1(\Omega)\) in the first step and the Cauchy-Schwarz inequality for the euclidean inner product in the second step, we obtain
\begin{equation}
    \frac{\langle u, v + w\rangle_{L^2(\Omega)}}{\|v + w\|_{H^1(\Omega)}} 
    \le
    \frac{\alpha \|v\|_{H^1(\Omega)} + \beta \|w\|_{H^1(\Omega)}}{\sqrt{\|v\|_{H^1(\Omega)}^2 + \|w\|_{H^1(\Omega)}^2}}
    \le \sqrt{\alpha^2 + \beta^2}.
\end{equation}
Taking the supremum over all such \(v + w\), yields
\begin{equation}
    \|u\|_{H^{-1}(\Omega)} 
    \le
    \sqrt{\alpha^2 + \beta^2}.
\end{equation}
By definition \(\alpha\) and \(\beta\) are suprema. Hence, there exist sequences \(v_n \in V\) and \(w_n \in V^\perp\) with \(\|v_n\|_{H^1(\Omega)} = \|w_n\|_{H^1(\Omega)} = 1\) fulfilling
\begin{equation}
    \langle u, v_n \rangle_{L^2(\Omega)} \overset{n \rightarrow \infty}{\longrightarrow} \alpha, 
    \quad 
    \langle u, w_n \rangle_{L^2(\Omega)} \overset{n \rightarrow \infty}{\longrightarrow} \beta.
\end{equation}
If \(\alpha = \beta = 0\) it follows that \(\|u\|_{H^{-1}(\Omega)} = 0\) and the lower bound is trivial. Thus, assume that \(\alpha \neq 0\) or \(\beta \neq 0\) and set \(\tilde \alpha \coloneqq \alpha / \sqrt{\alpha^2 + \beta^2}\) and \(\tilde \beta \coloneqq \beta / \sqrt{\alpha^2 + \beta^2}\). Specifically, \(\tilde \alpha ^2 + \tilde \beta^2 = 1\). Now, set \(\tilde  v_n \coloneqq \tilde \alpha v_n \in U\) and \(\tilde w_n \coloneqq \tilde \beta w_n \in U^\perp\), then
\begin{equation}
    \|\tilde v_n + \tilde w_n\|_{H^1(\Omega)}^2 = \sqrt{\tilde \alpha^2 \|v_n\|_{H^1(\Omega)}^2 + \tilde \beta^2 \|w_n\|_{H^1(\Omega)}^2} = 1.
\end{equation}
Therefore,
\begin{equation}
    \langle u, \tilde v_n + \tilde w_n \rangle_{L^2(\Omega)} 
    = 
    \tilde \alpha \langle u, v_n \rangle_{L^2(\Omega)} + \tilde \beta \langle u, w_n \rangle_{L^2(\Omega)} 
    \overset{n \rightarrow \infty}{\longrightarrow} 
    \sqrt{\alpha^2 + \beta^2},
\end{equation}
which yields 
\begin{equation}
    \|u\|_{H^{-1}(\Omega)} \geq \lim_{n \rightarrow \infty} \langle u, \tilde v_n + \tilde w_n \rangle_{L^2(\Omega)} = \sqrt{\alpha^2 + \beta^2}.
\end{equation}
Finally, combining the upper and lower bound gives the asserted identity.
\hfill $\square$

\section{Proof of Theorem \ref{theo:infimum_representation}}
\label{app:infimum_representation}
Let \(u \in V\). For the proof, we denote the right hand side of \eqref{eq:3} by \(|u|_{\inf}\). The set of decompositions of \(u\) required by \(|u|_{\inf}\) is non-empty, since the trivial decomposition \(v_0 = u\) and \(v_1 = \ldots = v_m = 0\) is always valid. Hence, given such a decomposition \(v_0, v_1, \ldots, v_m \in V\) of \(u\), we find that for any \(w \in V\)
\begin{align}
    \langle u, w\rangle_{L^2(\Omega)} 
    & =
    \langle v_0 + \sum_{i=1}^m \partial_i v_i , w\rangle_{L^2(\Omega)} \\
    & =
    \langle v_0, w\rangle_{L^2(\Omega)} + \sum_{i=1}^m \langle v_i, \partial_i^* w\rangle_{L^2(\Omega)}\\
    & \le 
    \|v_0\|_{L^2(\Omega)} \|w\|_{L^2(\Omega)} + \sum_{i=1}^m \|v_i\|_{L^2(\Omega)} \|\partial_i^*w\|_{L^2(\Omega)}\\
    & \le
    \|(v_0, v_1, \ldots, v_m)\|_{L^2(\Omega)} \|w\|_{1, V^*},
\end{align}
where we applied the Cauchy-Schwarz inequality in the fourth line for the \(L^2\) inner product and in the fifth line for the Euclidean inner product. Now, we find for \(w \neq 0\) that
\begin{equation}
    \frac{\langle u, w\rangle_{L^2(\Omega)}}{\|w\|_{1, V^*}} \leq \|(v_0, v_1, \ldots, v_m)\|_{L^2(\Omega)}.
\end{equation}
The left hand side is now independent of the specific choice of the decomposition \(v_0, v_1, \ldots, v_m\) and the right hand side of \(w \in V\). This allows to simultaneously take the supremum over all \(w \neq 0\) on the left hand side and the infimum over all decompositions \(v_0, v_1,\ldots, v_m\) on the right hand side. Hence, yielding 
\(|u|_{-1, V^*} \leq |u|_{\inf}\).
Conversely, choose \(v_* \in V\) such that
\begin{equation}
    u = v_* + \sum_{i=1}^m \partial_i \partial_i^* v_*,
\end{equation}
given explicitly by \(v_* = M u\). Thus, we estimate
\begin{align}
    |u|_{\inf} & \leq \|(v_*, \partial_1^* v_*, \ldots, \partial_m^* v_*)\|_{L^2(\Omega)} = \|v_*\|_{1, V^*} = |u|_{-1, V^*},
\end{align}
where we applied Proposition~\ref{prop:laplacian} in the last step.
\hfill $\square$

\newpage

%Bibliography
\bibliographystyle{plain}
\bibliography{references}  

@book{
    Lions1972,
    author    = {J. L. Lions and E. Magenes},
    title     = {Non-Homogeneous Boundary Value Problems and Applications},
    volume    = {1},
    series    = {Grundlehren der mathematischen Wissenschaften},
    publisher = {Springer-Verlag},
    address   = {Berlin, Heidelberg},
    year      = {1972},
    doi       = {10.1007/978-3-642-65161-8},
    isbn      = {978-3-642-65163-2},
}

@book{
    Brezis2011, 
    title={Functional Analysis, Sobolev Spaces and Partial Differential Equations}, 
    doi={10.1007/978-0-387-70914-7}, 
    publisher={Springer New York}, 
    author={Brezis, Haim}, 
    year={2011}, 
    language={en} 
}

@book{
    Jost2007,
    author    = {Jost, J{\"u}rgen},
    title     = {Partial Differential Equations},
    series    = {Graduate Texts in Mathematics},
    edition   = {2},
    publisher = {Springer},
    address   = {New York, NY},
    year      = {2007},
    isbn      = {978-1-4419-2380-6},
    doi       = {10.1007/978-0-387-49319-0}
}

@book{
    Ern2004,
    author    = {Ern, Alexandre and Guermond, Jean-Luc},
    title     = {Theory and Practice of Finite Elements},
    series    = {Applied Mathematical Sciences},
    volume    = {159},
    edition   = {1},
    publisher = {Springer},
    address   = {New York, NY},
    year      = {2004},
    isbn      = {978-0-387-20574-8},
    doi       = {10.1007/978-1-4757-4355-5}
}

@book{
    LeVeque2007,
    title={Finite difference methods for ordinary and partial differential equations: steady-state and time-dependent problems},
    series={Other Titles in Applied Mathematics},
    author={LeVeque, Randall J},
    year={2007},
    publisher={SIAM},
    isbn={978-0-89871-629-0},
    doi={10.1137/1.9780898717839},
}

@article{
    Eymard2000,
    author={Eymard, Robert and Gallou{\"e}t, Thierry and Herbin, Rapha{\`e}le},
    title={Finite volume methods},
    booktitle={Handbook of Numerical Analysis},
    series={Handbook of numerical analysis},
    volume={7},
    pages={713--1018},
    publisher={Elsevier},
    year={2000},
    doi={10.1016/S1570-8659(00)07005-8},
}

@incollection{
    Bernardi1997,
    title = {Spectral methods},
    series = {Handbook of Numerical Analysis},
    publisher = {Elsevier},
    volume = {5},
    pages = {209--485},
    year = {1997},
    booktitle = {Techniques of Scientific Computing (Part 2)},
    issn = {1570-8659},
    doi = {10.1016/S1570-8659(97)80003-8},
    author = {Christine Bernardi and Yvon Maday}
}

@book{
    Canuto2007,
    title={Spectral Methods: Fundamentals in Single Domains},
    author={Canuto, Claudio and Hussaini, M Yousuff and Quarteroni, Alfio and Zang, Thomas A.},
    series    = {Scientific Computation},
    publisher = {Springer},
    address   = {Berlin, Heidelberg},
    year      = {2006},
    isbn      = {978-3-540-30725-9},
    doi       = {10.1007/978-3-540-30726-6},
}

@book{
    Li2004,
    author    = {Li, Shaofan and Liu, Wing Kam},
    title     = {Meshfree Particle Methods},
    edition   = {1},
    publisher = {Springer},
    address   = {Berlin, Heidelberg},
    year      = {2004},
    isbn      = {978-3-540-22256-9},
    doi       = {10.1007/978-3-540-71471-2}
}

@article{
    Bramble1970,
    author = {Bramble, James H. and Schatz, Alfred H.},
    title = {Rayleigh-Ritz-Galerkin methods for dirichlet's problem using subspaces without boundary conditions},
    journal = {Communications on Pure and Applied Mathematics},
    volume = {23},
    number = {4},
    pages = {653-675},
    doi = {10.1002/cpa.3160230408},
    year = {1970}
}

@article{
    Babuska1973,
    author = {Ivo Babu{\v{s}}ka},
    title = {The Finite Element Method with Penalty},
    journal = {Mathematics of Computation},
    volume = {27},
    number = {122},
    pages = {221--228},
    year = {1973},
    publisher = {American Mathematical Society},
    doi = {10.1090/S0025-5718-1973-0351118-5},
}

@article{
    Barrett1986,
    author={Barrett, John W. and Elliott, Charles M.},
    title={Finite element approximation of the Dirichlet problem using the boundary penalty method},
    journal={Numerische Mathematik},
    year={1986},
    month={Jul},
    day={01},
    volume={49},
    number={4},
    pages={343-366},
    issn={0945-3245},
    doi={10.1007/BF01389536},
}

@article{
    Raissi2019,
    author  = {M. Raissi and P. Perdikaris and G.E. Karniadakis},
    title   = {Physics-informed neural networks: A deep learning framework for solving forward and inverse problems involving nonlinear partial differential equations},
    journal = {Journal of Computational Physics},
    volume  = {378},
    pages   = {686--707},
    year    = {2019},
    doi     = {10.1016/j.jcp.2018.10.045},
}

@misc{
    Raissi2017a,
    title={Physics Informed Deep Learning (Part I): Data-driven Solutions of Nonlinear Partial Differential Equations}, 
    author={Maziar Raissi and Paris Perdikaris and George Em Karniadakis},
    year={2017},
    eprint={1711.10561},
    archivePrefix={arXiv},
    primaryClass={cs.AI},
    url={https://arxiv.org/abs/1711.10561}, 
}

@misc{
    Raissi2017b,
    title={Physics Informed Deep Learning (Part II): Data-driven Discovery of Nonlinear Partial Differential Equations}, 
    author={Maziar Raissi and Paris Perdikaris and George Em Karniadakis},
    year={2017},
    eprint={1711.10566},
    archivePrefix={arXiv},
    primaryClass={cs.AI},
    url={https://arxiv.org/abs/1711.10566}, 
}

@article{
    Lawal2022,
    author = {Lawal, Zaharaddeen Karami and Yassin, Hayati and Lai, Daphne Teck Ching and Che Idris, Azam},
    title = {Physics-Informed Neural Network (PINN) Evolution and Beyond: A Systematic Literature Review and Bibliometric Analysis},
    journal = {Big Data and Cognitive Computing},
    volume = {6},
    year = {2022},
    number = {4},
    issn = {2504-2289},
    doi = {10.3390/bdcc6040140}
}

@article{
    Cai2021,
    author    = {Cai, S. and Mao, Z. and Wang, Z. et al.},
    title     = {Physics-informed neural networks (PINNs) for fluid mechanics: a review},
    journal   = {Acta Mechanica Sinica},
    volume    = {37},
    pages     = {1727--1738},
    year      = {2021},
    doi       = {10.1007/s10409-021-01148-1}
}

@article{
    Toscano2025,
    author={Toscano, Juan Diego
    and Oommen, Vivek
    and Varghese, Alan John
    and Zou, Zongren
    and Ahmadi Daryakenari, Nazanin
    and Wu, Chenxi
    and Karniadakis, George Em},
    title={From PINNs to PIKANs: recent advances in physics-informed machine learning},
    journal={Machine Learning for Computational Science and Engineering},
    year={2025},
    month={Mar},
    day={11},
    volume={1},
    number={1},
    pages={15},
    issn={3005-1436},
    doi={10.1007/s44379-025-00015-1},
}

@article{
    Hu2024,
    title = {Tackling the curse of dimensionality with physics-informed neural networks},
    journal = {Neural Networks},
    volume = {176},
    pages = {106369},
    year = {2024},
    issn = {0893-6080},
    doi = {10.1016/j.neunet.2024.106369},
    author = {Zheyuan Hu and Khemraj Shukla and George Em Karniadakis and Kenji Kawaguchi},
}

@article{
    Wang2021,
    author = {Wang, Sifan and Teng, Yujun and Perdikaris, Paris},
    title = {Understanding and Mitigating Gradient Flow Pathologies in Physics-Informed Neural Networks},
    journal = {SIAM Journal on Scientific Computing},
    volume = {43},
    number = {5},
    pages = {A3055-A3081},
    year = {2021},
    doi = {10.1137/20M1318043},
}

@article{
    Maddu2022,
    doi = {10.1088/2632-2153/ac3712},
    year = {2022},
    month = {feb},
    publisher = {IOP Publishing},
    volume = {3},
    number = {1},
    pages = {015026},
    author = {Maddu, Suryanarayana and Sturm, Dominik and Müller, Christian L and Sbalzarini, Ivo F},
    title = {Inverse Dirichlet weighting enables reliable training of physics informed neural networks},
    journal = {Machine Learning: Science and Technology},
}

@article{
    Berrone2023,
    author  = {Berrone, Stefano and Canuto, Claudio and Pintore, Moreno and Sukumar, N.},
    title   = {Enforcing Dirichlet boundary conditions in physics-informed neural networks and variational physics-informed neural networks},
    journal = {Heliyon},
    volume  = {9},
    number  = {8},
    pages   = {e18820},
    year    = {2023},
    doi     = {10.1016/j.heliyon.2023.e18820}
}

@article{
    Suarez2024,
    doi = {10.1088/2632-2153/ad62ac},
    year = {2024},
    month = {jul},
    publisher = {IOP Publishing},
    volume = {5},
    number = {3},
    pages = {035029},
    author = {Suarez Cardona, Juan-Esteban and Hofmann, Phil-Alexander and Hecht, Michael},
    title = {Negative Order Sobolev Cubatures: Preconditioners of Partial Differential Equation Learning Tasks Circumventing Numerical Stiffness},
    journal = {Machine Learning: Science and Technology},
}

@book{
    Adams2003,
    author={Adams, R.A. and Fournier, J.J.F.},
    title={Sobolev Spaces},
    series={Pure and Applied Mathematics},
    publisher={Academic Press},
    address={Amsterdam},
    year={2003},
    isbn={978-0-12-044143-3},
}

@article{Gagliardo1958,
  author  = {Gagliardo, Emilio},
  title   = {Proprietà di alcune classi di funzioni in più variabili},
  journal = {Ricerche di Matematica},
  volume  = {7},
  year    = {1958},
  pages   = {102--137},
}

@article{Slobodeckij1958,
  author  = {Slobodeckij, L. N.},
  title   = {Generalized Sobolev spaces and their applications to boundary value problems of partial differential equations},
  journal = {Leningrad. Gos. Ped. Inst. U\v{c}ep. Zap.},
  volume  = {197},
  year    = {1958},
  pages   = {54--112},
}

@article{ 
    Gagliardo1957,
	author     = {Gagliardo, Emilio},
	title      = {Caratterizzazioni delle tracce sulla frontiera relative ad alcune classi di funzioni in \emph{n}-variabili},
	journal    = {Rendiconti del Seminario Matematico della Università di Padova},
	year       = {1957},
	volume     = {27},
	pages      = {284--305},
}

@techreport{
    Aronszajn1955,
    author      = {Aronszajn, Nachman},
    title       = {Boundary values of functions with finite Dirichlet integral},
    institution = {University of Kansas},
    type        = {Technical Report},
    number      = {14},
    year        = {1955},
    pages       = {77--94},
}

@article{
    Bochev97,
    author = {Bochev, Pavel},
    year = {1997},
    month = {02},
    title = {Experiences With Negative Norm Least-Square Methods For The Navier-Stokes Equations},
    volume = {6},
    journal = {Electronic Transactions on Numerical Analysis}
}

@article{
    Bramble97,
    ISSN = {00255718, 10886842},
    author = {James H. Bramble and Raytcho D. Lazarov and Joseph E. Pasciak},
    journal = {Mathematics of Computation},
    number = {219},
    pages = {935--955},
    publisher = {American Mathematical Society},
    title = {A Least-Squares Approach Based on a Discrete Minus One Inner Product for First Order Systems},
    volume = {66},
    year = {1997},
    doi = {10.1090/S0025-5718-97-00848-X},
}

@article{
    Kim02,
    author = {Sang Dong Kim and Byeong Chun Shin},
    title = {$H^{-1}$ least-squares method for the velocity–pressure–stress formulation of Stokes equations},
    journal = {Applied Numerical Mathematics},
    volume = {40},
    number = {4},
    pages = {451-465},
    year = {2002},
    doi = {10.1016/S0168-9274(01)00095-2},
}

@article{
    Parima2008,
    doi={10.1016/j.na.2008.02.074},
    title={Potential theory and applications in a constructive method for finding critical points of Ginzburg-Landau type equations},
    author={Kazemi, Parimah and Neuberger, J.W.},
    journal={Nonlinear Analysis: Theory, Methods \& Applications vol. 69 iss.3},
    year={2008},
    month={aug},
    volume={69},
    issue={3},
    page={925--930},
}

@book{
    Neuberger97,
    author = {J. W. Neuberger},
    title = {Sobolev Gradients and Differential Equations, in: Lecture Notes in Mathetmatics},
    volume = {1670},
    publisher = {Springer},
    address   = {Berlin, Heidelberg},
    year = {1997},
    doi = {10.1007/BFb0092831},
    isbn = {978-3-540-69594-3},
}

@article{
    Beurling59,
    author = {A. Beurling  and J. Deny },
    title = {Dirichlet Spaces},
    journal = {Proceedings of the National Academy of Sciences},
    volume = {45},
    number = {2},
    pages = {208-215},
    year = {1959},
    doi = {10.1073/pnas.45.2.208},
}

@article{
    Aleksov2016,
    title = {On the Markov inequality in the L2-norm with the Gegenbauer weight},
    journal = {Journal of Approximation Theory},
    volume = {208},
    pages = {9-20},
    year = {2016},
    issn = {0021-9045},
    doi = {10.1016/j.jat.2016.03.005},
    author = {D. Aleksov and G. Nikolov and A. Shadrin},
}

@article{
    Cohen2017,
    author  = {Cohen, Albert and Migliorati, Giovanni},
    title   = {Optimal weighted least-squares methods},
    journal = {The SMAI Journal of Computational Mathematics},
    year    = {2017},
    volume  = {3},
    pages   = {181--203},
    doi     = {10.5802/smai-jcm.24},
}

@article{
    Hecht2026,
    author = {Hecht, Michael and Hofmann, Phil-Alexander and Wicaksono, Damar and Hernandez Acosta, Uwe and Gonciarz, Krzysztof and Kissinger, Jannik and Sivkin, Vladimir and Sbalzarini, Ivo F},
    title = {Multivariate Newton interpolation in downward closed spaces reaches the optimal Bernstein–Walsh approximation rate},
    journal = {IMA Journal of Numerical Analysis},
    pages = {draf137},
    year = {2026},
    month = {03},
    doi = {10.1093/imanum/draf137},
}

@book{
    Quarteroni2007,
    author    = {Alfio Quarteroni and Riccardo Sacco and Fausto Saleri},
    title     = {Numerical Mathematics},
    edition   = {2},
    series    = {Texts in Applied Mathematics},
    publisher = {Springer},
    address   = {Berlin, Heidelberg},
    year      = {2007},
    doi       = {10.1007/b98885},
    isbn      = {978-3-540-34658-6}
}

@article{
    Virtanen2020,
    author  = {Virtanen, P. and Gommers, R. and Oliphant, T.E. and others},
    title   = {{SciPy} 1.0: Fundamental Algorithms for Scientific Computing in Python},
    journal = {Nature Methods},
    volume  = {17},
    pages   = {261--272},
    year    = {2020},
    doi     = {10.1038/s41592-019-0686-2}
}

@article{
    Liu1989,
    author  = {Liu, Dong C. and Nocedal, Jorge},
    title   = {On the Limited Memory BFGS Method for Large Scale Optimization},
    journal = {Mathematical Programming},
    volume  = {45},
    pages   = {503--528},
    year    = {1989},
    doi     = {10.1007/BF01589116}
}
\end{document}